\documentclass[11pt]{amsart}
\usepackage{amsfonts}
\usepackage{stmaryrd}  
\textheight 22.5cm \textwidth 15cm 
\topmargin -0.3cm \oddsidemargin 0.3cm \evensidemargin 0.3cm 

\usepackage{graphicx}
\usepackage{epsfig}
\usepackage{amsmath}
\usepackage{amsthm}
\usepackage{amssymb}
\usepackage{float} 
\usepackage{multirow}
\usepackage{verbatim}
\usepackage{fancyhdr}
\usepackage{subfigure}
\usepackage{color}
\usepackage{bm}
\usepackage{enumerate}
\usepackage[numbers,sort&compress]{natbib}
\usepackage{diagbox}

\newcommand{\jl}{j-\frac 12}
\newcommand{\jr}{j+\frac 12}

\newcommand{\dd}{\mathrm{d}}

\newtheorem{lem}{Lemma}[section]
\newtheorem{exmp}{Example}
\newtheorem{thm}{Theorem}[section]
\newtheorem{prop}{Proposition}[section]
\numberwithin{equation}{section}
\newtheorem{rem}{Remark}[section]

\allowdisplaybreaks

\usepackage{soul}
\usepackage{xcolor}

\soulregister\cite7

\begin{document}

\title[Penalty DGM for ND problems]{Numerical analysis of a class of penalty 
discontinuous Galerkin methods for nonlocal diffusion problems}  

\author{Qiang Du}
\address{Department of Applied Physics and Applied Mathematics and Data Science Institute, Columbia University, New York, NY 10027, USA.}
\thanks{Q. Du's research is partially supported by  US National Science Foundation grant DMS-2012562.}
\email{qd2125@columbia.edu}

\author{Lili Ju}
\address{Department of Mathematics, University of South Carolina, Columbia, SC 29208, USA.}
\thanks{L. Ju's research is partially supported by US National Science Foundation grant DMS-2109633.}
\email{ju@math.sc.edu}

\author{Jianfang Lu}
\address{School of Mathematics, South China  University of Technology, Guangzhou,
Canton 510641, China.}
\thanks{J. Lu's research is partially supported by NSFC 11901213 and Science and Technology Program of Guangzhou 
2023A04J1300. }
\email{jflu@scut.edu.cn}

\author{Xiaochuan Tian}
\address{Department of Mathematics, University of California San Diego, La Jolla, CA 92093, USA.}
\thanks{X. Tian's research is partially supported by US National Science Foundation grant DMS-2111608.}
\email{xctian@ucsd.edu} 

\subjclass[2010]{ 65M60, 65R20, 45A05 }  

\begin{abstract}
In this paper, we consider a class of discontinuous Galerkin (DG) methods 
for one-dimensional nonlocal diffusion (ND) problems.  
The nonlocal models, which are integral equations, are widely used in describing 
many physical phenomena with long-range interactions. 
The ND problem is the nonlocal analog of the classic diffusion problem, 
and as the interaction radius (horizon) vanishes, then the nonlocality disappears and the ND problem 
converges to the classic diffusion problem. Under certain conditions, the exact solution to the ND problem 
may exhibit discontinuities, setting it apart from the classic diffusion problem. 
Since the DG method shows its great advantages in resolving problems with discontinuities 
in computational fluid dynamics over the past several decades, it is natural to adopt the DG method to compute the ND problems. 
Based on \cite{DuCAMC2020}, we develop the DG methods with different penalty terms, 
ensuring that the proposed DG methods have local counterparts as the horizon vanishes. 
This indicates the proposed methods will converge to the existing DG schemes 
as the horizon vanishes, which is crucial for achieving {\it asymptotic compatibility}. 
Rigorous proofs are provided to demonstrate the stability, error estimates, and 
asymptotic compatibility of the proposed DG schemes. 
To observe the effect of the nonlocal diffusion, we also consider the time-dependent convection-diffusion problems
with nonlocal diffusion. We conduct several numerical experiments, including accuracy tests 
and Burgers' equation with nonlocal diffusion, and various horizons are taken to show the good performance 
of the proposed algorithm and validate the theoretical findings. 
\end{abstract}

\keywords{Nonlocal diffusion; Asymptotic compatibility;
Discontinuous Galerkin; Interior penalty}  

\maketitle

\section{Introduction}
\label{sec_intro}

Nonlocal modeling has become quite popular in describing some physical phenomena 
involving nonlocal interactions of finite range in recent years. 
Unlike the classic local partial differential equation models, the nonlocal models 
can describe the physical phenomena in a setting with reduced regularity requirements 
and allow the singularities and discontinuities to occur naturally. Additionally, due to the finite 
interaction range, the nonlocal models would be more computationally efficient 
compared to some integro-differential equations characterized by an infinite range of interactions. 
The benefits of nonlocal models have led to their widespread use in various fields, such as  
crack and fracture in solid mechanics \cite{SillingJMPS2000, SillingAAM2010, DuM2AN2011}, 
traffic flows \cite{HuangSIAP2022}, nonlocal wave equation \cite{GuanNMPDE2015, DuCICP2018}, 
nonlocal convection-diffusion problems \cite{DuDCDSB2014, TianCMAME2015}, 
phase transitions \cite{DuSINUM2016, DuSINUM2019}, and image processing 
\cite{GilboaMMS2007, GilboaMMS2008}. 
Nonlocal volume-constrained diffusion models have a strong connection 
with the fractional Laplacian and fractional derivatives \cite{DuSIREV2012, DEliaAN2020}. 
The boundary conditions of the nonlocal diffusion (ND) problems 
are defined on a nonzero volume region outside the domain, serving as a natural extension of those in differential equation problems. 
Since most singular phenomena consist of both smooth and nonsmooth regions, 
a natural approach is to apply nonlocal models in the nonsmooth regions and local models in the smooth regions. 
Consequently, if local models are used near the boundary, the boundary conditions would remain the classic ones. 
This leads to the seamless coupling of the local and nonlocal models, 
see \cite{DuSINUM2018, DEliaJPNM2022} and the references cited therein. 

Currently, there are several research topics on nonlocal models, including 
 nonlocal vector calculus \cite{GunzburgerMMS2010, DuM3AS2013, DEliaJPNM2020}, 
nonlocal trace spaces \cite{TianSIAP2017, DuJFA2022}, 
nonlocal modeling and its mathematical investigations 
and numerical simulations, to name a few, \cite{DuM2AN2011, TianSINUM2013, DengM2AN2013, DuDCDSB2014, XuSINUM2014, TianCMAME2015, QiuJCP2015, DuSINUM2016, DuSINUM2017, DuCICP2018, DuSINUM2019, DuMC2019, DuFCM2020, HuangSIAP2022}. 
For a comprehensive literature review, readers can refer to \cite{DuSIREV2012, DuBook2019, DEliaAN2020} and the references therein. 
Among the various numerical methods for nonlocal problems, 
the discontinuous Galerkin (DG) finite element method is a natural 
choice due to its effectiveness in handling singularities and discontinuities. 
In 1973, Reed and Hill introduced the first DG method to solve the steady transport equation \cite{Reed1973}. 
Around 1990, Cockburn and Shu et al. combined the third-order total variation diminishing 
Runge-Kutta method \cite{ShuJCP1988, ShuSISSC1988} in temporal discretization with DG method in spatial discretization 
and successfully solved hyperbolic conservation laws 
 \cite{CockburnM2AN1991, CockburnMC1989, CockburnJCP1989, CockburnMC1990, CockburnJCP1998}. 
Since then, the DG method has gained significant attention and become widely used in many fields, such as 
aeroacoustics, oceanography, meteorology, electromagnetism, granular flows, turbulent flows, 
viscoelastic flows, magneto-hydrodynamics, 
oil recovery simulation, semiconductor device simulation, transport of contaminants in porous media 
 and weather forecasting, etc. 
Meanwhile, the study on the DG method for diffusion problems was developed independently in the 1970s 
and there exist various DG methods for diffusion problems. 
To name a few, there are symmetric interior penalty Galerkin method \cite{DouglasLNP1976}, 
nonsymmetric interior penalty Galerkin method \cite{RiviereCG1999}, 
Baumann-Oden's method \cite{BaumannCMAME1999}, 
Babu\v{s}ka-Zl\'{a}mal's method \cite{BabuskaSINUM1973}, 
local DG method \cite{CockburnSINUM1998}, ultra-weak DG method \cite{ChengMC2008}, 
recovery DG method \cite{vanLeerAIAA2005}, direct DG method \cite{LiuSINUM2009} 
and hybridizable DG method \cite{CockburnSINUM2009a}, sparse grid DG method \cite{WangJCP2016}, 
weak Galerkin method \cite{WangJCAM2013},  embedded DG method \cite{CockburnSINUM2009b}, etc. 
In particular, in \cite{ArnoldSINUM2002} Arnold et al. provided a general framework to analyze the DG methods 
with interior penalty and revealed the key aspects of constructing these methods. 

Even though many DG methods have been constructed and proposed for classic diffusion problems previously, 
 these DG methods cannot be trivially extended to ND problems. 
The major difficulty lies in the absence of differential operators 
in the nonlocal diffusion problems, 
which means integration by parts cannot be used, and jumps do not appear in the weak formulations. 
In \cite{ChenCMAME2011}, Chen and Gunzburger proposed a discontinuous finite element method 
for peridynamic models 
in the continuous finite element framework using the piecewise polynomial finite element space. 
They observed that the convergence of the numerical solutions depends on the choice of the horizon 
and mesh size when using the piecewise constant discretizations. 
In \cite{TianSINUM2015}, Tian and Du constructed a nonconforming DG method for nonlocal variational problems. 
However, none of the previously mentioned DG methods are {\it asymptotically compatible} (see, e.g., \cite{TianSINUM2014, TianSIREV2020}). 
In \cite{DuMC2019}, the authors developed a DG method for ND problems
 , which is a nonlocal analog of the local DG method \cite{CockburnSINUM1998} 
and achieved asymptotic compatibility. Later in \cite{DuCAMC2020}, 
the authors proposed a penalty DG method, which is a nonlocal analog of Babu\v{s}ka-Zl\'{a}mal's DG method \cite{BabuskaSINUM1973}. 
In this paper, we extend this work by constructing a more general DG method for ND problems. 
We provide a general framework for the penalty DG methods applied to ND problems 
and artificially construct the penalty terms involving jumps. With the artificial penalty terms, we are able to 
recover several aforementioned well-known DG methods. Theoretical results on boundedness, 
stability, and a priori error estimates are also provided. To observe the nonlocal diffusion effect, we consider 
a convection-diffusion problem with nonlocal diffusion. With the standard DG discretization for 
the convective term and the proposed methods for the ND term, we obtain the $L^2$-stability in the 
semi-discrete case. Several numerical examples, including accuracy tests, nonsmooth ND problems, 
and Burgers' equations with nonlocal diffusion, are presented to validate the good performance 
of the proposed algorithm.

\medskip
The rest of the paper is organized as follows. In Section \ref{sec_model_dgm}, we first introduce 
the nonlocal diffusion problem and its variational form, then propose the discontinuous Galerkin 
methods with some artificial penalty terms. In Section \ref{sec_theory}, we study the boundedness, 
stability, and a priori error estimates for the proposed schemes. The convection-diffusion model with 
nonlocal diffusion is considered and a semi-discrete analysis is given. 
In Section \ref{sec_numerics}, we show some numerical examples, including the smooth and nonsmooth problems, 
and the time-dependent convection-diffusion problem with nonlocal diffusion. 
Concluding remarks are given in Section \ref{sec_summary}.

\section{Method formulation}
\label{sec_model_dgm}

In this section, we first introduce the one-dimensional ND problem and its variational form. 
Next, we construct the penalty DG methods for the ND problem with artificial penalty terms. 
The penalties on the jumps not only ensure that the integral is well-defined but can also be adjusted 
to create nonlocal analogs of several existing DG methods for classic diffusion problems.

\subsection{Problem description}
\label{sec_problem_description}

 Consider the one-dimensional steady-state ND problem with the nonlocal volume constraint 
in the following 
\begin{eqnarray}
\label{eqn_model_tind}
\left\{
\begin{aligned}
&\mathcal{L}_\delta u = f_\delta, \ \  x \in \Omega \triangleq (a, b),  \\
&u = 0, \ \ \ \ x \in \Omega_\delta \triangleq [a-\delta,a] \cup [b,b+\delta], 
\end{aligned}
\right.
\end{eqnarray}
where $\delta >0$ is a constant. 
The operator $\mathcal{L}_\delta$ is defined as
$$
\mathcal{L}_\delta u (x) : = - 2 \int_{x-\delta}^{x + \delta} (u(y) - u(x)) 
\widehat{\gamma_\delta}(x,y) \, \dd y. 
$$
The kernel function $\widehat{\gamma_\delta}(x,y)$ is nonnegative and symmetric. 
For simplicity, in the paper, we consider a special case that 
\begin{eqnarray}
\label{eqn_bd_kernel}
\left\{ \begin{aligned}
& \widehat{\gamma_\delta}(x,y) = \gamma_\delta(s)  
= \gamma_\delta(-s), \; s = x - y,  \\  
& s^2 \gamma_\delta(s) \in L^1_{loc}(\mathbb{R}).  
\end{aligned}
\right.
\end{eqnarray}
The natural energy space associated with \eqref{eqn_model_tind} is 
$$
\mathcal{S} = \Big\{ v \in L^2\big( \widetilde{\Omega} \big) : 
\| v \|_{\mathcal{S}} < \infty, \ \ \text{$ v = 0$ 
on $\Omega_\delta$} \Big\}, 
$$
where $\widetilde{\Omega} = \Omega \cup \Omega_\delta$ and the semi-norm 
$\Vert v \Vert_{\mathcal{S}}$ is defined as
$$
\Vert v \Vert_{\mathcal{S}}^2 = 2 \int_0^\delta \gamma_\delta(s)  
\int_{\widetilde{\Omega}} \big( E_s^+ v(x) \big)^2 \, \dd x \dd s ,  
$$
where $E_s^+ w(x) = w(x + s) - w(x)$. 
In fact, the semi-norm $\Vert \cdot \Vert_{\mathcal{S}}$ is a norm on $\mathcal{S}$
(see e.g. \cite{TianSINUM2015}). 
Then the variation form of \eqref{eqn_model_tind} is as follows: 
\begin{align}
\label{variation_form}
\text{Find } u \in \mathcal{S} \text{ such that  } \quad B(u, v) = (f, v),\quad \forall 
\, v \in \mathcal{S},  
\end{align}
where the bilinear form is given as 
\begin{align} \label{eqn_bilinear_form}
B(u, v) = 2 \int_{0}^{\delta} \gamma_\delta(s) 
\int_{a - \delta}^{b + \delta}E_s^+ u(x) \, E_s^+ v(x) \, \dd x \dd s,   
\end{align}
and $(\cdot, \cdot)$ is the usual $L^2$ product on $\widetilde{\Omega}$. 
Note that the integral in $B(u, v)$ requires the values of $u$ outside $\widetilde{\Omega}$, 
therefore we take the zero extension of $u$ such that $u = 0 $ on $\widetilde{\Omega}^c$.


Since $s^2 \gamma_\delta(s) \in L^1_{loc}(\mathbb{R})$, without loss of generality 
we assume that 
$$
 \int_{-\delta}^{\delta} s^2 \gamma_\delta(s) \, \dd s = 1. 
$$
When $\delta \rightarrow 0$, the nonlocal diffusion problem 
\eqref{eqn_model_tind} becomes the heat equation with Dirichlet boundary condition 
as follows:
\begin{eqnarray}
\label{eqn_poisson}
\left\{\begin{aligned}
&- u_{xx} = f, \ \ x \in \Omega,\\
& u(a) = u(b) = 0.
\end{aligned}
\right.
\end{eqnarray}
We refer the readers to \cite{DuSIREV2012, DuBook2019, DEliaAN2020} for more details.


\subsection{Penalty discontinuous Galerkin methods}
\label{sec_dgm}

 To construct the penalty DG method, we first take the partition of the domain $\widetilde{\Omega}$ as $\mathcal{T}_h 
= \big\{ I_j = \big(x_{j-\frac 12}, x_{j+\frac 12} \big) \big\}_{j=-m+1}^{N+m}$, with 
\begin{align} \label{1dmesh}
x_{\frac 12} = a, \,\, x_{N+\frac 12} = b, \,\,
 x_{-m-\frac 12} \leq a - \delta < x_{-m + \frac 12}, \, \,
x_{N+m - \frac 12} < b + \delta \leq x_{N+m + \frac 12}. 
\end{align}
Assume the partition $\mathcal{T}_h$ is regular, i.e., there exists a constant 
$\nu > 0$ such that 
\begin{align} \label{mesh_regularity}
\nu h \leq \rho. 
\end{align}
where $h, \rho$ are given as 
\begin{align} \label{mesh_info}
h = \max_j h_j, \, \rho = \min_j h_j, \, h_j = x_{j+\frac 12} - x_{j-\frac 12}.
\end{align}
Then we define the finite element space  as
\begin{align}
\label{fes}
 V_h = V_h^k = \left\{v \in L^2\big( \widetilde{\Omega} \big): \ \ v|_{I_j}\in \mathcal{P}_k(I_j),\; j = 1, \cdots, N,  \,
 v|_{ \Omega_\delta} = 0 \right\},
\end{align}
 where $\mathcal{P}_k(I_j)$ is the space of polynomials on $I_j$ whose degrees are at most $k$.  
Following \cite{DuCAMC2020}, we divide $B(u, v)$ in \eqref{eqn_bilinear_form} into three parts as follows:  
\begin{align}
\label{eqn_bilinear_form1} 
B(u, v)  = B_1(u, v) + B_2(u, v) + B_3(u, v), \quad  \forall \, u, v \in \mathcal{S},  
\end{align}  
where the above three terms are given as 
\begin{align}
\label{eqn_bilinear_form2}
\begin{aligned}
B_1(u, v) & \, =2 \int_0^{\hat{h}} \gamma_\delta(s) \sum\limits_{j} 
\int_{I_{j, 1}^s} E_s^+ u(x) 
E_s^+ v(x) \,\dd x \dd s, \\
B_2(u, v) & \, = 2\int_0^{\hat{h}} \gamma_\delta(s) \sum\limits_{j}  
\int_{I_{j, 2}^s } E_s^+ u(x) 
E_s^+ v(x) \,\dd x \dd s,   \\
B_3(u, v) & \, = 2 \int_{\hat{h}}^\delta \gamma_\delta(s) \sum\limits_{j} \int_{I_j} 
E_s^+ u(x) E_s^+ v(x) \,\dd x \dd s ,   
\end{aligned}
\end{align} 
where $I_{j, 1}^s$ and $I_{j, 2}^s$ are given as 
$$
I_{j, 1}^s = \big( x_{j - \frac 12}, x_{j+\frac 12} - s \big), \quad
I_{j, 2}^s = \big( x_{j + \frac 12} - s, x_{j+\frac 12} \big), \quad
\hat{h} = \min \{\rho, \delta\}. 
$$
In \cite{DuCAMC2020}, we showed $B(u, v)$ may not be well-defined in the discrete space $V_h$ 
since $B_2(u, v)$ would cause troubles. Since 
$ I_{j, 2}^s = \big( x_{j + \frac 12} - s, x_{j+\frac 12} \big), $
then $u(x)$ and $u(x+s)$ are in the different elements, and this may lead to 
$ \gamma_\delta(s) (u(x+s) - u(x)) (v(x+s) - v(x))$ not integrable on $ (0, \hat{h})$ when 
$u, v \in V_h$. 
We now take the similar treatment in \cite{DuCAMC2020} and put the boundary terms together, 
then we can obtain the bilinear form at the discrete level in the following. 
\begin{align}
\label{eqn_bilinear_form3}
B_h(u_h, v_h)  =  E(u_h, v_h) + J(u_h, v_h) + \mu P(u_h, v_h), 
\end{align}
where $\mu > 0$ is taken to be large enough to ensure the stability 
and $E(u_h, v_h), \, P(u_h, v_h)$ are defined as 
\begin{align*}
\begin{aligned}
E(u_h, v_h) & \, = 2 \sum\limits_{j} \int_0^{\hat{h}} \gamma_\delta(s) 
\int_{I_{j, 1}^s} E_s^+ u_h(x) 
E_s^+ v_h(x) \, \dd x \dd s \\ 
& \quad + 2 \sum\limits_{j} \int_0^{\hat{h}} \gamma_\delta(s) 
\int_{I_{j, 2}^s} \Big(E_s^+ u_h(x) - [\![u_h]\!]_{j+\frac 12} \Big) 
\Big(E_s^+ v_h(x) - [\![v_h]\!]_{j+\frac 12} \Big) \, \dd x \dd s \\
& \quad + 2 \sum\limits_{j} \int_{\hat{h}}^\delta \gamma_\delta(s) \int_{I_j} E_s^+ u_h(x) 
E_s^+ v_h(x) \,  \dd x \dd s , \\
P(u_h, v_h) & \,  = \int_0^{\hat{h}} s^2 \gamma_\delta(s) \, \dd s \sum\limits_{j} 
 [\![u_h]\!]_{j+\frac 12} [\![v_h]\!]_{j+\frac 12}, 
\end{aligned}
\end{align*}
where $[\![w]\!]_{j+\frac 12} = w\big(x_{j+\frac 12}^+ \big) - w\big(x_{j+\frac 12}^- \big)$. 
If we define $g_v(x, s)$ in the following: 
\begin{align} \label{eqn_gv}
g_v(x, s) =  \left\{
\begin{aligned}
& E_s^+ v(x) - [\![v]\!]_{j+\frac 12},  \quad  \text{when } 
x \in I_{j, 2}^s, \; s \in (0, \hat{h}),  \\
& E_s^+ v(x),  \hspace{2cm}  \text{elsewhere.}
\end{aligned}
\right.
\end{align}
Then we have 
\begin{align*}
E(u_h, v_h) & \, =  2 \int_0^{\delta} \gamma_\delta(s) 
\sum\limits_{j} \int_{I_j } g_{u_h}(x, s) g_{v_h}(x, s) \, \dd x \dd s.  
\end{align*}
$J(u_h, v_h)$ consists of boundary terms obtained by an integration by parts 
in the nonlocal sense and different choices of $J(u_h, v_h)$ and 
$\mu$ lead to various DG schemes. 
For instance, we show three kinds of DG formulations in the following 
by choosing different $J(u_h, v_h)$ and $\mu$, and the local limits 
of these DG methods have been known for many years. 

\medskip
\noindent$\bullet$ Nonlocal version of Babu\v{s}ka-Zl\'amal  method \cite{BabuskaSINUM1973} (nBZ) : 
\begin{align}
\label{eqn_nbz}
\begin{aligned}
J(u_h, v_h) & \, = 0, \\
\mu & \, = O(h^{-2k - 1}). 
\end{aligned}
\end{align}
Here $k$ is the degree of the polynomials in $V_h^k$. \\
$\bullet$ Nonlocal version of IP method  \cite{DouglasLNP1976} (nIP) : 
\begin{align}
\label{eqn_nip}
\begin{aligned}
J(u_h, v_h) & \, = 2 \sum\limits_{j} [\![v_h]\!]_{j+\frac 12} \int_0^{\hat{h}} 
\gamma_\delta(s) \int_{I_{j, 2}^s } g_{u_h}(x, s)
 \, \dd x \dd s \\
& \quad + 2 \sum\limits_{j} [\![u_h]\!]_{j+\frac 12} \, \int_0^{\hat{h}} 
\gamma_\delta(s) \int_{I_{j, 2}^s }  g_{v_h}(x, s) \, \dd x \dd s,   \\
\mu = & \, O(h^{-1}). 
\end{aligned}
\end{align}
$\bullet$ Nonlocal version of NIPG method  \cite{RiviereCG1999} (nNIPG) :  
\begin{align}
\label{eqn_nnipg}
\begin{aligned}
J(u_h, v_h) =& \, 2 \sum\limits_{j} [\![v_h]\!]_{j+\frac 12} \int_0^{\hat{h}} 
\gamma_\delta(s) \int_{I_{j, 2}^s } g_{u_h}(x, s)
 \, \dd x \dd s \\
& - 2 \sum\limits_{j} [\![u_h]\!]_{j+\frac 12} \, \int_0^{\hat{h}} 
\gamma_\delta(s) \int_{I_{j, 2}^s }  g_{v_h}(x, s) \, \dd x \dd s,  \\
\mu = & \, O(h^{-1}). 
\end{aligned}
\end{align}
In particular, the nBZ method has already been analyzed in \cite{DuCAMC2020}. In the rest of 
the paper, we focus on the two DG schemes, nIP and nNIPG. 

 \begin{rem}
 \label{rmk1}
As previously mentioned, several DG methods exist for diffusion problems in the literature. 
Although only three DG methods are listed above, this approach can be extended 
to several penalty DG methods in {\rm \cite{ArnoldSINUM2002}} without substantial difficulty. 
It is worth noting that the penalty term $P(u_h, v_h)$ may require suitable modifications of the lifting operator. 
We refer readers to {\rm \cite{ArnoldSINUM2002}} for more details. 
 \end{rem}

Now we present the penalty DG formulation for the ND problem \eqref{eqn_model_tind} as follows:
\begin{align}
\label{dgscm}
\text{Find $u_h \in V_h$ such that } \quad B_h(u_h, v_h) = (f_\delta, v_h), \quad 
\forall \, v_h \in V_h, 
\end{align}
 where $B_h(u_h, v_h)$ is given in \eqref{eqn_bilinear_form3}, 
 and $J(u_h, v_h)$ and $\mu$ can be taken either one from \eqref{eqn_nbz},  \eqref{eqn_nip} 
 or \eqref{eqn_nnipg}.

\section{Boundedness, Stability and a priori error estimates}
\label{sec_theory}

In this section, we consider the boundedness and stability of the DG methods. 
The nIP (\eqref{dgscm} and \eqref{eqn_nip}) and nNIPG schemes (\eqref{dgscm} and \eqref{eqn_nnipg}) are consistent, while 
the nBZ scheme (\eqref{dgscm} and \eqref{eqn_nbz})  is not. 
To control the inconsistency error in the nBZ method, the so-called superpenalty technique was applied 
to estimate the inconsistent term, see, e.g., \cite{ArnoldSINUM2002, DuCAMC2020}. 
In this section, we focus on nIP and nNIPG methods, deriving the 
boundedness, stability, and a priori error estimates for these two DG methods. 
 Throughout this section, we let $C>0$ represent a generic 
constant independent of $h$ and $\delta$ but with possibly different values. 
Now let us define the semi-norms for $v \in V(h) \triangleq V_h + \mathcal{S}$ as follows:
\begin{align}
\label{eqn_seminorm}
\begin{aligned}
|v|_{E, h}^2 =&\,   2 \sum\limits_{j} \int_0^{\delta} \gamma_\delta(s) 
\int_{I_j} g_{v}(x, s)^2 \, \dd x \dd s , \\
|v|_{J, h}^2 = & \, 2 \sum\limits_{j} h_j \int_0^{\hat{h}} \gamma_\delta(s) 
\, \dfrac{1}{s} \int_{I_{j, 2}^s } g_{v}(x, s)^2 \, \dd x \dd s, \\
|v|_{P, h}^2 =& \,  \int_0^{\hat{h}} s^2 \gamma_\delta(s) \,\dd s \sum\limits_{j} 
 [\![v]\!]_{j+\frac 12}^2 . 
\end{aligned}
\end{align}

To consider the boundedness and stability for the bilinear form $B_h(\cdot, \cdot)$, 
we define the norm for $v \in V(h) $ as follows:
\begin{align}
\label{eqn_norm}
||| v |||^2 = |v|_{E, h}^2 + |v|_{J,h}^2 + \mu\, |v|_{P, h}^2. 
\end{align}
To see that $||| \cdot |||$ is a norm on $V_h$, we have the following 
proposition in \cite{DuCAMC2020}. 
 \begin{prop} \label{prop_discrete_poincare} 
 For the general kernels $\gamma_\delta$ satisfying \eqref{eqn_bd_kernel}, it holds that 
 for some constant $C>0$ independent of $\delta$ and $h$ such that 
\begin{align}
\label{eqn_poincare}
 \| v_h \|_{L^2} \leq C \, ||| v_h ||| , \quad \forall \,v_h \in V_h. 
\end{align}
\end{prop}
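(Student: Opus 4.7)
The plan is to reduce the claim to the continuous nonlocal Poincaré inequality $\|v\|_{L^2}\leq C\|v\|_{\mathcal{S}}$ on the energy space $\mathcal{S}$, which is known to hold with a constant depending only on $\Omega$ and the moment normalization of $\gamma_\delta$ (see \cite{TianSINUM2015}). The obstruction is that elements of $V_h$ are piecewise polynomials and in general do not lie in $\mathcal{S}$ once $\gamma_\delta$ is singular enough at the origin, so the interface jumps $[\![v_h]\!]_{j+\frac 12}$ must first be absorbed into the jump and penalty semi-norms before that inequality can be invoked.

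Concretely, I would construct an Oswald-type continuous interpolant $\tilde v_h$ of $v_h$: assign $\tilde v_h$ the averaged nodal value $\tfrac 12\big(v_h(x_{j+\frac 12}^-)+v_h(x_{j+\frac 12}^+)\big)$ at each interior node, match the remaining interior degrees of freedom of $v_h$ on each $I_j$, and force $\tilde v_h\equiv 0$ on $\Omega_\delta$, producing $\tilde v_h\in\mathcal{S}\cap V_h^k$. A standard broken-polynomial (Oswald) estimate together with the mesh regularity \eqref{mesh_regularity} then gives
\[
\|v_h-\tilde v_h\|_{L^2}^2 \leq C\sum_j h_j\,[\![v_h]\!]_{j+\frac 12}^2.
\]
Next, I would compare $\|\tilde v_h\|_{\mathcal{S}}^2$ with $|||v_h|||^2$. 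Using the pointwise identity $E_s^+ v_h(x)=g_{v_h}(x,s)+[\![v_h]\!]_{j+\frac 12}$ on $I_{j,2}^s$, the squared $\mathcal{S}$-seminorm of $\tilde v_h$ differs from $|v_h|_{E,h}^2$ by a cross term in $g_{v_h}[\![v_h]\!]$ and a pure quadratic jump term weighted by $\int_0^{\hat h} s\,\gamma_\delta(s)\,\dd s$. The cross term is absorbed by Cauchy-Schwarz into a small multiple of $|v_h|_{E,h}^2$ plus a multiple of $|v_h|_{J,h}^2$ (whose $h_j/s$ weight is designed precisely for this absorption), while the quadratic jump term is absorbed into $\mu|v_h|_{P,h}^2$ using the normalization $\int_{-\delta}^\delta s^2\gamma_\delta(s)\,\dd s=1$ and the size of $\mu$. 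The triangle inequality $\|v_h\|_{L^2}\leq\|v_h-\tilde v_h\|_{L^2}+\|\tilde v_h\|_{L^2}$, together with the continuous Poincaré bound applied to $\tilde v_h$, then delivers $\|v_h\|_{L^2}\leq C|||v_h|||$.

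The main obstacle is the uniform (in $\delta$ and $h$) absorption step: both the Oswald bound $\sum_j h_j[\![v_h]\!]^2$ and the jump moment $\sum_j[\![v_h]\!]^2\int_0^{\hat h} s\,\gamma_\delta\,\dd s$ must be dominated by $\mu|v_h|_{P,h}^2 = \mu\bigl(\int_0^{\hat h} s^2\gamma_\delta\,\dd s\bigr)\sum_j[\![v_h]\!]^2$. The ratio of kernel moments $\int_0^{\hat h} s\,\gamma_\delta / \int_0^{\hat h} s^2\gamma_\delta$ can vary with $\delta/\hat h$, so a short case analysis on $\hat h=\min(\rho,\delta)$ appears unavoidable, and it is precisely here that the penalty scaling $\mu=O(h^{-1})$ (or the larger $O(h^{-2k-1})$ for nBZ) enters in an essential way; inverse estimates on the piecewise polynomials $v_h|_{I_j}$ together with the mesh regularity bound $\nu h\leq\rho$ are also used to close the argument.
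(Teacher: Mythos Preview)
The paper does not actually prove this proposition; it quotes it from \cite{DuCAMC2020} and moves on. So there is no ``paper's proof'' to compare against, and your proposal must stand on its own.

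Your Oswald-interpolant strategy is a reasonable line of attack, but the sketch has a genuine gap in the uniform-in-$(\delta,h)$ absorption step, precisely where you flag it. Concretely, the Oswald estimate gives $\|v_h-\tilde v_h\|_{L^2}^2\le C\sum_j h_j[\![v_h]\!]_{j+\frac12}^2$, and you propose to dominate this by $\mu|v_h|_{P,h}^2=\mu\bigl(\int_0^{\hat h}s^2\gamma_\delta\,\dd s\bigr)\sum_j[\![v_h]\!]_{j+\frac12}^2$. With $\mu\sim\rho^{-1}$ this requires $h\rho\le C\int_0^{\hat h}s^2\gamma_\delta(s)\,\dd s$. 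In the regime $\hat h=\rho\ll\delta$ this fails: for the model kernels $\gamma_\delta(s)=\tfrac{3-\alpha}{2\delta^{3-\alpha}}|s|^{-\alpha}$ one has $\int_0^\rho s^2\gamma_\delta\,\dd s=\tfrac12(\rho/\delta)^{3-\alpha}$, so the requirement becomes $h\rho\le C(\rho/\delta)^{3-\alpha}$, which blows up as $h\to 0$ with $\delta$ fixed whenever $\alpha<1$. The same obstruction hits the comparison of $\|\tilde v_h\|_{\mathcal S}$ with $|v_h|_{E,h}$: the contribution from the range $s\in(\hat h,\delta)$ produces a term of size $\bigl(\int_{\hat h}^{\delta}\gamma_\delta\,\dd s\bigr)\sum_j h_j[\![v_h]\!]^2$, which for small $\alpha$ and $\rho\ll\delta$ is again not absorbable by $\mu|v_h|_{P,h}^2$ alone. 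Your closing remark that ``inverse estimates\ldots close the argument'' does not address this; inverse estimates relate norms of $v_h$ on a single element and cannot manufacture the missing factor $(\rho/\delta)^{3-\alpha}$.

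A second, smaller issue: the sentence ``the squared $\mathcal S$-seminorm of $\tilde v_h$ differs from $|v_h|_{E,h}^2$ by a cross term in $g_{v_h}[\![v_h]\!]$'' conflates two different functions. The identity $E_s^+v_h=g_{v_h}+[\![v_h]\!]$ on $I_{j,2}^s$ is about $v_h$, not $\tilde v_h$; to compare $\|\tilde v_h\|_{\mathcal S}^2=|\tilde v_h|_{E,h}^2$ with $|v_h|_{E,h}^2$ you must estimate $g_{\tilde v_h}-g_{v_h}$, which on $I_{j,2}^s$ equals $-E_s^+(v_h-\tilde v_h)+[\![v_h]\!]$. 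The leading jump contributions do cancel there (so this piece is $O(s)$ and harmless), but the long-range piece $s\in(\hat h,\delta)$ does not enjoy any such cancellation, and that is exactly where the uniform bound breaks. To repair the argument in the regime $\rho<\delta$ you would need to extract jump control from the $E_3$ part of $|v_h|_{E,h}^2$ itself (the cross-cell differences $E_s^+v_h$ for $s>\rho$ do ``see'' the jumps), not from the penalty term; this is a different and more delicate estimate than anything in your sketch.
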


\subsection{Boundedness}
\label{sec_boundedness}

The boundedness is straightforward after we define the norm $||| \cdot ||| $ 
on $V(h)$. For instance, we consider the term $J(u_h, v_h)$ in \eqref{eqn_nip}. 
In fact, $\forall \, v, w \in V_h + \mathcal{S}$, by Cauchy-Schwarz inequality we have 
\begin{align}
\label{eqn_bound_1}
\begin{aligned}
&\sum\limits_{j} [\![v]\!]_{j+\frac 12} \int_0^{\hat{h}} \gamma_\delta(s) 
\int_{I_{j, 2}^s} g_w(x, s) \, \dd x \dd s \\
 & \quad \leq
 \Big(  \int_0^{\hat{h}} s^2 \gamma_\delta(s) \,\dd s  
\sum\limits_{j} \frac{1}{h_j} [\![v]\!]_{j+\frac 12}^2 \Big)^{\frac 12} \, 
\Big( \sum\limits_{j} h_j \int_0^{\hat{h}} \gamma_\delta(s) \frac{1}{s}
 \int_{I_{j, 2}^s} g_w(x, s)^2 \, \dd x \dd s \Big)^{\frac 12} \\
 & \quad \leq \, (2 \rho )^{- \frac 12} \, |v|_{P, h} \, |w|_{J,h},   
\end{aligned}
\end{align} 
where $\rho$ is defined in \eqref{mesh_info}. 
Similarly, we have 
\begin{align}
\label{eqn_bound_2}
\sum\limits_{j} [\![w]\!]_{j+\frac 12} \int_0^{\hat{h}} \gamma_\delta(s)
\int_{I_{j, 2}^s} g_v(x, s) \, \dd x \dd s \leq  ( 2 \rho )^{- \frac 12} \, |w|_{P, h} \, |v|_{J,h}.
\end{align}
From \eqref{eqn_bound_1}, \eqref{eqn_bound_2} and \eqref{eqn_nip}, we obtain
\begin{align}
\label{eqn_bound_3}
J(v, w) \leq \, \sqrt{2}  \rho^{- \frac 12} ( |v|_{P, h} \, |w|_{J,h} +  |w|_{P, h} \, |v|_{J,h} ). 
\end{align}
Therefore, for any $\mu \geq 1/\rho$, by Cauchy-Schwarz inequality we have
\begin{align}
\label{eqn_boundedness}
\begin{aligned}
B_h(v, w) & \, \leq \, |v|_{E, h} |w|_{E, h} 
+  \sqrt{2} \rho^{- \frac 12} (|w|_{P, h} \, |v|_{J,h} +  |v|_{P, h} \, |w|_{J,h} ) 
+ \mu \, |v|_{P, h} \, |w|_{P, h}  \\
& \, \leq 2 \, ||| v ||| \, |||w |||, \quad \forall \, v, w \in V(h). 
\end{aligned}
\end{align} 

\subsection{Stability}
\label{sec_stability}

 We now show the DG methods \eqref{dgscm} are stable, i.e. $\exists \, C_s > 0$ 
independent of $h$ and $\delta$ such that 
\begin{align} \label{eqn_stability}
 B_h(v_h, v_h) \geq C_s ||| v_h |||^2, \quad \forall \, v_h \in V_h. 
\end{align}
From the definition of the bilinear form $B_h(\cdot, \cdot)$ in \eqref{eqn_bilinear_form3}, 
for any $v_h \in V_h$ we have 
\begin{align*}
B_h(v_h, v_h)  = | v_h |_{E, h}^2 + J(v_h, v_h) + \mu\, | v_h |_{P, h}^2. 
\end{align*} 
therefore, we can obtain the stability once $J(v_h, v_h)$ is controlled.  
A Lemma is presented below, which is crucial in deriving the stability of the nIP method. 
\begin{lem} \label{lem_sta_cond}
For any $ v_h \in V_h$, there exists a constant $C_0 > 0$  independent of $h$ and $\delta$, 
such that 
\begin{align} \label{eqn_sta_cond} 
|v_h|_{J,h}^2 \leq C_0 \,  |v_h|_{E, h}^2 .   
\end{align}
\end{lem}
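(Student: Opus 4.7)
The plan is to reduce the bound to a sharp polynomial inverse inequality on boundary-layer polynomials and then transfer back through the broken $H^1$-seminorm of $v_h$. First, on each element I would introduce the two boundary polynomials $\psi_j(\eta) := v_h(x_{j+1/2} - \eta) - v_h(x_{j+1/2}^-)$ and $\phi_{j+1}(\xi) := v_h(x_{j+1/2} + \xi) - v_h(x_{j+1/2}^+)$, both in $\mathcal{P}_k$ and vanishing at the origin. A direct substitution into the definition \eqref{eqn_gv} of $g_v$ shows that for $x \in I_{j,2}^s$ and $\tau = x_{j+1/2} - x \in (0,s)$,
\[
g_{v_h}(x, s) = \phi_{j+1}(s-\tau) - \psi_j(\tau),
\]
the subtracted jump $[\![v_h]\!]_{j+1/2}$ being precisely the offset that cancels the two one-sided constants. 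Combining Cauchy-Schwarz with the sharp inverse inequality
\[
\int_0^s p(\xi)^2\,\dd\xi \leq C(k)\Bigl(\tfrac{s}{h}\Bigr)^3 \int_0^h p(\xi)^2\,\dd\xi, \qquad p \in \mathcal{P}_k,\ p(0)=0,\ 0 < s \leq h
\]
(obtained via the factorization $p(\xi) = \xi\,\tilde p(\xi)$ with $\tilde p \in \mathcal{P}_{k-1}$ and norm equivalence on the reference interval), this yields $\int_{I_{j,2}^s} g_{v_h}^2 \,\dd x \leq C (s/h_j)^3 (\|\phi_{j+1}\|^2 + \|\psi_j\|^2)$, where the mesh regularity \eqref{mesh_regularity} absorbs the ratio $h_{j+1}/h_j$.

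Next, multiplying by $h_j/s$, integrating against $\gamma_\delta(s)$ over $(0,\hat h)$, and summing over $j$ gives
\[
|v_h|_{J,h}^2 \leq C \int_0^{\hat h} s^2 \gamma_\delta(s) \,\dd s \cdot \sum_j h_j^{-2}\bigl(\|\phi_{j+1}\|^2 + \|\psi_j\|^2\bigr).
\]
Since each $\psi_j$ (resp.\ $\phi_{j+1}$) is just $v_h$ shifted by a constant on $I_j$ (resp.\ $I_{j+1}$), a triangle inequality combined with a polynomial inverse inequality and the polynomial Poincaré estimate on each element yield $\|\psi_j\|^2 + \|\phi_{j+1}\|^2 \leq C h_j^2\bigl(\|\partial_x v_h\|_{L^2(I_j)}^2 + \|\partial_x v_h\|_{L^2(I_{j+1})}^2\bigr)$, so that after summation,
\[
|v_h|_{J,h}^2 \leq C \int_0^{\hat h} s^2 \gamma_\delta(s)\,\dd s \cdot \|\partial_x v_h\|_{L^2(\widetilde\Omega)}^2.
\]

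The remaining step, and the principal obstacle, is the reverse inequality $\int_0^{\hat h} s^2 \gamma_\delta(s)\,\dd s \cdot \|\partial_x v_h\|_{L^2(\widetilde\Omega)}^2 \leq C'\,|v_h|_{E,h}^2$. The main contribution should come from the $I_{j,1}^s$ pieces of $|v_h|_{E,h}^2$: for $s$ small relative to $h_j$, a Taylor expansion of the polynomial $v_h|_{I_j}$ gives the pointwise bound $\int_{I_{j,1}^s} g_{v_h}^2\,\dd x \geq c(k)\,s^2 \|\partial_x v_h\|_{L^2(I_j)}^2$, which when weighted by $\gamma_\delta$ and summed produces the desired estimate. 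This pointwise bound degenerates as $s$ approaches $h_j$, which can happen when $\hat h = \rho$ is the minimum mesh size and the kernel $\gamma_\delta$ concentrates near $s = \hat h$. In that regime one must also exploit the third (long-range) piece of $|v_h|_{E,h}^2$ on $s \in (\hat h,\delta)$, carefully tracking how $E_s^+ v_h$ picks up multiple jumps of $v_h$, with the kernel normalization $\int_0^\delta s^2 \gamma_\delta\,\dd s = 1$ ensuring uniformity in $\delta$. A case split on the ratio $\hat h/\delta$ should keep the bookkeeping manageable and deliver the bound $C_0$ independent of both $h$ and $\delta$.
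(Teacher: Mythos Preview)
Your first reduction—writing $g_{v_h}$ on $I_{j,2}^s$ through the boundary polynomials $\psi_j,\phi_{j+1}$ and invoking the cubic inverse inequality for polynomials vanishing at a point—is correct and yields the bound
\[
|v_h|_{J,h}^2 \;\le\; C\int_0^{\hat h}s^2\gamma_\delta(s)\,\dd s\cdot \|\partial_x v_h\|_{L^2(\widetilde\Omega)}^2
\]
cleanly, uniformly in $s\in(0,\hat h]$. That half of the argument is in fact tighter than the paper's treatment of the same step.

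The gap is in the reverse inequality. Your proposed cure—appealing to the long-range piece $E_3$ over $s\in(\hat h,\delta)$ and splitting on $\hat h/\delta$—cannot close the argument. First, when $\hat h=\delta$ (i.e.\ $\delta\le\rho$) the term $E_3$ vanishes identically, yet the degeneracy you describe is still present whenever $\delta$ is close to $\rho$. Second, even when $E_3$ is nonzero, for $s>\hat h$ the quantity $E_s^+v_h$ carries the jumps of $v_h$ across one or more interfaces, so there is no lower bound of the form $E_3\ge c\int_{\hat h}^{\delta}s^2\gamma_\delta\,\dd s\cdot\|\partial_x v_h\|_{L^2}^2$ (take $v_h$ piecewise constant to see that $E_3$ and $\|\partial_x v_h\|_{L^2}^2$ decouple). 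The kernel normalisation does not help either, since it says nothing about how the mass of $s^2\gamma_\delta$ is split between $(0,\hat h)$ and $(\hat h,\delta)$.

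The fix is simpler than you anticipate and avoids $E_3$ entirely. Split the $s$-integral in $|v_h|_{J,h}^2$ at $s=\varepsilon\rho$ for a fixed small $\varepsilon=\varepsilon(k)$. On $(0,\varepsilon\rho)$ your route through $\|\partial_x v_h\|_{L^2}^2$ works in both directions, since for such $s$ a Taylor expansion shows $\int_{I_{j,1}^s}(E_s^+v_h)^2\,\dd x\ge \tfrac14 s^2\|\partial_x v_h\|_{L^2(I_j)}^2$; this gives $J_1\le C\,E_{1,1}$. On $(\varepsilon\rho,\hat h)$ simply observe that the integrands defining $|v_h|_{J,h}^2$ and $|v_h|_{E,h}^2$ over $I_{j,2}^s$ differ only by the factor $h_j/s\le h/(\varepsilon\rho)\le 1/(\nu\varepsilon)$, so that portion $J_2$ is bounded by a constant multiple of $E_{2,2}$ \emph{directly}, with no detour through the broken $H^1$-seminorm. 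This is the decomposition the paper uses.
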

The proof of Lemma \ref{lem_sta_cond} is given in Appendix \ref{sec_proof_lem_sta_cond}. 

If \eqref{eqn_sta_cond} holds, 
upon using $J(v_h, v_h) \leq 2 \, \rho^{- \frac 12} \, 
|v_h|_{P, h} \, |v_h|_{J,h} $ from \eqref{eqn_bound_3} and Cauchy-Schwarz inequality 
we then have 
\begin{align} \label{eqn_sta_1}
\begin{aligned}
B_h(v_h, v_h) & \,  \geq  | v_h |_{E, h}^2 - 2 \, \rho^{- \frac 12} \, 
|v_h|_{P, h} \, |v_h|_{J,h}  + \mu\, | v_h |_{P, h}^2 \\ 
& \, \geq | v_h |_{E, h}^2  - 2 C_0 \rho^{-1} |v_h|_{P, h}^2 - (2 C_0)^{-1} |v_h|_{J,h}^2
+ \mu\, | v_h |_{P, h}^2 \\  
& \, \geq \frac{1}{2}| v_h |_{E, h}^2 + ( \mu - 2 C_0 \rho^{-1} ) |v_h|_{P, h}^2  \\
& \, \geq (2C_0 + 2)^{-1}  ( | v_h |_{E, h}^2 + | v_h |_{J, h}^2 + \mu | v_h |_{P, h}^2 ) \\
& \quad +  ( \mu - 2 C_0 \rho^{-1} - \mu (2C_0 + 2)^{-1}  ) \, |v_h|_{P, h}^2. 
\end{aligned}
\end{align}
Therefore, if we take $\mu$ sufficiently large, for instance 
$ \mu \geq (1 - (2 C_0 + 2)^{-1})^{-1 } 2 C_0 \rho^{-1} $, such that  
$$  
 \mu - 2 C_0 \rho^{-1} - \mu (2C_0 + 2)^{-1}  \geq 0, 
$$ 
then from \eqref{eqn_sta_1} we immediately  obtain desired result \eqref{eqn_stability} 
with $C_s = (2C_0 + 2)^{-1}$.

\subsection{An a priori error estimate}
\label{sec_error_estimate}

 In the previous section, we have obtained the boundedness and stability of the DG methods 
 \eqref{dgscm}, we now consider the consistency and approximation error. 
 In the error estimates, we first make the assumption 
that the exact solution $u$ is smooth, which means $u$ does not have discontinuities inside the domain $\widetilde{\Omega}$ so that the continuous interpolation can be defined.  
We take the continuous interpolant 
$u_I \in V_h$ of the exact solution $u$ so that $u - u_I$ will be zero at the element 
interfaces, then we have the following approximation properties \cite{DuCAMC2020}: 
\begin{align} 
\label{eqn_approx} 
\| u - u_I \|_{L^2} \leq C  h^{k+1}\, |u|_{H^{k+1}}  \quad {\rm and} \quad  
|||u - u_I |||  \leq C  h^{ k}  \, |u|_{H^{k+1}},  
\end{align}
where $C > 0$ is independent of $h$ and $\delta$, and $\displaystyle |u|_{H^{k+1}} = \Big( \int_{\widetilde{\Omega}} (\partial_x^{k+1} u )^2 \dd x \Big)^{1/2}$ is the semi-norm. We refer the readers to \cite{Susanne2008} for more details.

In \cite{DuCAMC2020}, we know that with enough smoothness of the exact solution $u$, 
the bilinear form $B(u, v_h)$ is well-defined and $B(u, v_h) = (f_\delta, v_h)$. 
For the nIP method and nNIPG method, if they are consistent, i.e.  $B_h(u, v_h) = B(u, v_h)$, 
we then have 
$$
B_h(u, v_h) = B(u, v_h) = (f_\delta, v_h) = B_h(u_h, v_h), \quad \forall \, v_h \in V_h.   
$$
In fact, we can check the consistency with some calculations that 
\begin{align*}
E(u, v_h) + J(u, v_h) \, &= 2 \sum\limits_{j} \int_0^{\hat{h}} \gamma_\delta(s) 
\int_{I_{j, 1}^s} E_s^+ u(x) 
E_s^+ v_h(x) \, \dd x \dd s \\ 
& \quad + 2 \sum\limits_{j} \int_0^{\hat{h}} \gamma_\delta(s) 
\int_{I_{j, 2}^s} E_s^+ u(x) 
\Big(E_s^+ v_h(x) - [\![v_h]\!]_{j+\frac 12} \Big) \, \dd x \dd s \\
& \quad + 2 \sum\limits_{j} \int_{\hat{h}}^\delta \gamma_\delta(s) \int_{I_j} E_s^+ u(x) 
E_s^+ v_h(x) \,  \dd x \dd s \\
& \quad + 2 \sum\limits_{j} [\![v_h]\!]_{j+\frac 12} \int_0^{\hat{h}} 
\gamma_\delta(s) \int_{I_{j, 2}^s } E_s^+ u(x) \, \dd x \dd s \\
 & = B_1(u, v_h) + B_2(u, v_h) + B_3(u, v_h) = B(u, v_h). 
 \end{align*}
Note that we use the smoothness of $u$ that $[\![u]\!]_{j+\frac 12} = 0, \, \forall \, j$, which also leads to  
$P(u, v_h) = 0$, thus we have $B_h(u, v_h) = E(u, v_h) + J(u, v_h) + \mu P(u, v_h) = B(u, v_h)$. 
Therefore, the nIP method is consistent, as well as the conventional IP method, 
and it also holds for the nNIPG method. 

So far, we have obtained the boundedness, stability, 
 approximation properties and consistency of the DG methods \eqref{dgscm}, we now 
proceed to derive the error estimates. 
 For the consistent DG method nIP and nNIPG, we have 
\begin{align}
\label{eqn_errest_1} 
\begin{aligned}
C_s ||| u_I - u_h |||^2 & \,\leq  B_h(u_I - u_h, u_I - u_h) \\
& \, = B_h(u_I - u, u_I - u_h) + B_h(u - u_h, u_I - u_h) \\
& \, \leq 2 \, ||| u_I - u ||| \, ||| u_I - u_h |||.   
\end{aligned}
\end{align}
With \eqref{eqn_approx}, \eqref{eqn_errest_1} and triangle inequality, we then obtain
\begin{align} \label{eqn_errest_2}
\begin{aligned}
||| u - u_h ||| & \, \leq ||| u - u_I ||| + ||| u_I - u_h ||| \\ 
& \, \leq \Big( 1+ \frac{2}{C_s} \Big) ||| u - u_I ||| \leq C h^k \, |u|_{H^{k+1}}. 
\end{aligned}
\end{align}
For the nBZ method, we also have a similar result as \eqref{eqn_errest_2}. 
Since the nBZ method is not consistent, we need to make an extra effort to 
control the inconsistent term. In fact, the nBZ method relies on a rather heavy 
penalty and it may require some preconditioning since the condition number of the stiff matrix would be 
large due to this penalty. For more details, one can refer to \cite{DuCAMC2020}. 
We now summarize the above results as follows. 

\begin{thm} \label{thm_errest}
Consider the DG methods \eqref{dgscm} for solving the ND problem 
\eqref{eqn_model_tind}, with the finite element space $V_h$ defined in \eqref{fes} 
and the degrees of the piecewise polynomials $k \geq 1$. With suitable 
penalties on the jumps of element interfaces, such as in \eqref{eqn_nip}, \eqref{eqn_nnipg}
and \eqref{eqn_nbz}, there exists a unique numerical solution $u_h \in V_h$ to 
\eqref{dgscm}. Assume the exact solution of \eqref{eqn_model_tind} 
$u \in H^{k+1}\big( \widetilde{\Omega} \big)$, then we have the following error estimate:  
$$||| u - u_h ||| \leq C h^k \| u \|_{H^{k+1}}. $$
\end{thm}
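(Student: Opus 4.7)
The plan is to assemble the ingredients already developed in Sections 3.1--3.3 into a standard Strang-type argument, and separately handle the inconsistent nBZ case. First, existence and uniqueness of $u_h$ follow immediately from the stability estimate \eqref{eqn_stability}: since $V_h$ is finite dimensional and $B_h(\cdot,\cdot)$ is coercive with respect to the norm $|||\cdot|||$ (which is indeed a norm on $V_h$ by Proposition \ref{prop_discrete_poincare}), the linear system \eqref{dgscm} has a unique solution.

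For the consistent methods nIP and nNIPG, I would proceed exactly as in the computation leading to \eqref{eqn_errest_2}. Let $u_I \in V_h$ be the continuous interpolant of $u$; consistency gives the Galerkin orthogonality $B_h(u - u_h, v_h) = 0$ for all $v_h \in V_h$, so stability together with boundedness yields
\begin{align*}
C_s ||| u_I - u_h |||^2
&\leq B_h(u_I - u_h,\, u_I - u_h)
 = B_h(u_I - u,\, u_I - u_h) \\
&\leq 2\, ||| u_I - u ||| \; ||| u_I - u_h |||.
\end{align*}
Dividing by $||| u_I - u_h |||$ and invoking the triangle inequality and the approximation estimate \eqref{eqn_approx} gives $||| u - u_h ||| \leq (1 + 2/C_s)\, ||| u - u_I ||| \leq C h^k |u|_{H^{k+1}}$, which after absorbing the semi-norm into the full $H^{k+1}$ norm yields the stated bound.

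For the nBZ method the Galerkin orthogonality fails, so I would modify the argument to account for the inconsistency. The idea is to write
\[
C_s ||| u_I - u_h |||^2 \leq B_h(u_I - u, u_I - u_h) + \big[B_h(u, u_I - u_h) - (f_\delta, u_I - u_h)\big],
\]
where the bracketed term is the consistency error. Since the only source of inconsistency in nBZ is the penalty term $\mu P(u, v_h)$ evaluated against a smooth $u$ (for which $\llbracket u \rrbracket_{j+\frac 12} = 0$), the bracket actually vanishes for smooth $u$; the real issue, as flagged in the passage after \eqref{eqn_errest_2}, is that the $O(h^{-2k-1})$ superpenalty makes estimating $|||u - u_I|||$ the delicate point, because the $\mu|u - u_I|_{P,h}^2$ contribution must be controlled by a local trace/interpolation estimate on $\llbracket u - u_I \rrbracket$. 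The main obstacle in the whole proof is therefore this balance in the nBZ case: one needs the interpolation bound $|u - u_I|_{P,h} \leq C h^{k + 1/2} |u|_{H^{k+1}}$ so that $\mu |u - u_I|_{P,h}^2 \leq C h^{2k} |u|_{H^{k+1}}^2$, preserving the $O(h^k)$ rate despite the heavy penalty. Once this is in place, the same stability/boundedness chain closes the argument, and combining the three cases establishes the uniform estimate $|||u - u_h||| \leq C h^k \|u\|_{H^{k+1}}$.
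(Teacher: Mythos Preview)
Your argument for the consistent methods nIP and nNIPG is correct and matches the paper exactly. The existence/uniqueness remark is also fine.

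However, your treatment of the nBZ case contains a genuine error. You claim the only source of inconsistency is $\mu P(u,v_h)$ and that, since $[\![u]\!]_{j+\frac12}=0$ for smooth $u$, the consistency defect vanishes. This is not right: $P(u,v_h)=0$ is indeed true, but $E(u,v_h)\neq B(u,v_h)$. From the consistency computation in Section~3.3 one sees that $B(u,v_h)-E(u,v_h)$ equals precisely the first sum in the nIP $J$-term, namely
\[
2\sum_j [\![v_h]\!]_{j+\frac12}\int_0^{\hat h}\gamma_\delta(s)\int_{I_{j,2}^s} E_s^+ u(x)\,\dd x\,\dd s,
\]
which does not vanish. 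This is the actual inconsistency that has to be controlled.

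Moreover, the ``main obstacle'' you identify---controlling $\mu\,|u-u_I|_{P,h}^2$---is a non-issue: $u_I$ is the \emph{continuous} interpolant, so $[\![u-u_I]\!]_{j+\frac12}=0$ for every $j$ and hence $|u-u_I|_{P,h}=0$ identically, regardless of how large $\mu$ is. The approximation bound \eqref{eqn_approx} therefore holds for nBZ with no extra work. The real role of the superpenalty $\mu=O(h^{-2k-1})$ is the opposite of what you describe: bounding the consistency error above by \eqref{eqn_bound_1} gives a factor $\rho^{-1/2}|v_h|_{P,h}\cdot|u|_{J,h}$, and since $\sqrt{\mu}\,|v_h|_{P,h}\leq|||v_h|||$, one needs $\mu^{-1/2}\rho^{-1/2}=O(h^k)$ to recover the rate---exactly what the heavy penalty supplies. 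This is the mechanism the paper alludes to (with details deferred to \cite{DuCAMC2020}).
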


\begin{rem} \label{rmk_errest}
From \eqref{eqn_errest_2}, we can see the $L^2$ error $\| u - u_h \|_{L^2}$ 
is controlled by the interpolation 
error $||| u - u_I |||$ for the nIP and nNIPG methods. For integrable kernels with a fixed horizon 
$\delta$, the discrete energy norm $||| \cdot |||$ is equivalent to the $L^2$ norm 
{\rm (}see e.g. {\rm \cite{MengeshaPRSESA2014}}{\rm )}. Thus, in this situation, we have the error estimate:  
$$||| u - u_h ||| \leq C(\delta) h^{k + 1} \| u \|_{H^{k+1}}. $$
\end{rem}

\subsection{Asymptotic compatibility}
\label{sec_ac}

In the continuous level, when the horizon $\delta \to 0$, the solution $u$ of the ND problem 
\eqref{eqn_model_tind} converges to the solution $u_{loc}$ of the corresponding local problem 
\eqref{eqn_poisson} (see e.g. \cite{DuSIREV2012}), i.e. 
\begin{align} \label{eqn_u_to_uloc}
\| u - u_{loc} \|_{L^2} \to 0, \quad {\rm as } \,\, \delta \to 0. 
\end{align}  
It is desirable to preserve such a limiting behavior in the numerical approximations, 
termed as {\it asymptotic compatibility} \cite{TianSINUM2014}, such that $u_h \to u_{loc}$ when 
 $\delta, h \to 0$ simultaneously. 
In \cite{TianSINUM2013}, Tian and Du studied several existing numerical schemes 
and showed some of the numerical discretizations might not preserve such a limiting behavior. 
Later in \cite{TianSINUM2014}, Tian and Du established an abstract mathematical 
framework for the numerical studies of a class of parametrized problems. 
We show that the DG method \eqref{dgscm} is also asymptotically compatible 
under some appropriate conditions, stated in the following theorem. 

\begin{thm} \label{thm_ac}
Consider the DG methods \eqref{dgscm} for solving the ND problem 
\eqref{eqn_model_tind}, with the finite element space $V_h^k$ defined in \eqref{fes}, $k \geq 1$. 
Assume the exact solution $u \in H^{1 + \beta}\big( \widetilde{\Omega} \big)$, $0 < \beta < 1$, 
 and $\| u \|_{H^{1 + \beta}\big( \widetilde{\Omega}\big) } $ is uniformly bounded with respect to the parameter $\delta$. 
Then the DG methods \eqref{dgscm} are asymptotically compatible, i.e. 
\begin{align} \label{eqn_ac}  
\| u_h - u_{loc} \|_{L^2} \to 0, \quad \text{as } \delta, h \to 0. 
\end{align}
\end{thm}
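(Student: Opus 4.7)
The plan is to combine the triangle inequality with a $\delta$-uniform energy error bound. First, I split
\[
\|u_h - u_{loc}\|_{L^2} \leq \|u_h - u\|_{L^2} + \|u - u_{loc}\|_{L^2},
\]
where $u = u_\delta$ is the nonlocal solution at horizon $\delta$. The second term tends to zero as $\delta \to 0$ by \eqref{eqn_u_to_uloc}, so the whole task reduces to bounding $\|u_h - u\|_{L^2}$ by a quantity that vanishes as $h \to 0$ uniformly in $\delta$, using only the $H^{1+\beta}$ regularity of $u$.

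For the consistent schemes (nIP and nNIPG), Galerkin orthogonality $B_h(u - u_h, v_h) = 0$ holds for all $v_h \in V_h$. Let $u_I \in V_h$ be a continuous piecewise polynomial interpolant of $u$, so that $[\![u - u_I]\!]_{j+\frac12} = 0$ at every interior interface. Then the stability \eqref{eqn_stability}, the boundedness \eqref{eqn_boundedness}, and Galerkin orthogonality combine to give
\[
C_s\, |||u_h - u_I|||^2 \leq B_h(u_h - u_I, u_h - u_I) = B_h(u - u_I, u_h - u_I) \leq 2\,|||u - u_I|||\,|||u_h - u_I|||.
\]
Dividing and applying the triangle inequality together with the Poincaré-type bound \eqref{eqn_poincare} yields
\[
\|u - u_h\|_{L^2} \leq \|u - u_I\|_{L^2} + C\,|||u - u_I|||,
\]
with constants independent of $h$ and $\delta$. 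For the nBZ scheme the same inequality holds up to an additional inconsistency term absorbed by the superpenalty argument from \cite{DuCAMC2020}.

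Next I would estimate $|||u - u_I|||$ uniformly in $\delta$. Because $u - u_I$ is continuous across interior interfaces, the penalty seminorm vanishes: $|u - u_I|_{P,h} = 0$. Since $g_{u - u_I}$ then reduces to $E_s^+(u - u_I)$, the energy seminorm satisfies $|u - u_I|_{E,h} = \|u - u_I\|_{\mathcal{S}}$. The elementary bound $(E_s^+ w)^2 \leq s\int_x^{x+s}(w'(y))^2\,\dd y$, together with the normalization $\int_0^\delta s^2\gamma_\delta(s)\,\dd s = 1/2$ and the fractional interpolation estimate $|u - u_I|_{H^1} \leq C h^\beta |u|_{H^{1+\beta}}$, produces the $\delta$-uniform bound $|u - u_I|_{E,h} \leq C h^\beta |u|_{H^{1+\beta}}$; the standard estimate $\|u - u_I\|_{L^2} \leq C h^{1+\beta}|u|_{H^{1+\beta}}$ handles the $L^2$ term. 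For the $J$ seminorm, the analogous pointwise bound together with the restriction $s \leq \hat h \leq h$ gives
\[
|u - u_I|_{J,h}^2 \leq C\,h \int_0^{\hat h} s\gamma_\delta(s)\,\dd s \cdot |u - u_I|_{H^1}^2,
\]
and the kernel factor must be controlled by splitting against the normalization of $s^2\gamma_\delta$. Assembling the three seminorm bounds shows $|||u - u_I||| \to 0$ as $h \to 0$ uniformly in $\delta$, whence $\|u_h - u_{loc}\|_{L^2} \to 0$, which is \eqref{eqn_ac}.

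The main obstacle will be the $J$-seminorm estimate: the factor $\int_0^{\hat h} s\gamma_\delta(s)\,\dd s$ does not admit an obvious $\delta$-uniform upper bound for the singular kernels allowed by \eqref{eqn_bd_kernel}, so the argument must exploit both the mesh-scale cutoff $s \leq \hat h \leq h$ and the normalization $\int_0^\delta s^2\gamma_\delta = 1/2$ in a coordinated way, in the same spirit in which the discrete Poincaré inequality \eqref{eqn_poincare} was established in \cite{DuCAMC2020}. Every other ingredient (Galerkin orthogonality, stability, boundedness, Poincaré, and fractional interpolation) is already available from the preceding sections, so this kernel-weighted estimate is really the crux of the theorem.
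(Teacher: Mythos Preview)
Your overall strategy---triangle inequality, Galerkin orthogonality, and then a direct bound on $|||u-u_I|||$ using only $H^{1+\beta}$ regularity---looks natural, but the place you flag as ``the crux'' is in fact a genuine obstruction, not just a technical nuisance. For the kernels admitted by \eqref{eqn_bd_kernel} one allows, e.g., $\gamma_\delta(s)=c_\delta|s|^{-\alpha}$ with $\alpha\in[2,3)$, and then $\int_0^{\hat h} s\,\gamma_\delta(s)\,\dd s=+\infty$. Your displayed bound
\[
|u-u_I|_{J,h}^2 \leq C\,h\int_0^{\hat h} s\,\gamma_\delta(s)\,\dd s\cdot |u-u_I|_{H^1}^2
\]
is therefore vacuous in that regime, and there is no way to recover it from the normalization of $s^2\gamma_\delta$ alone: the factor $1/s$ in $|\cdot|_{J,h}$ makes this seminorm genuinely one order more singular than $|\cdot|_{E,h}$. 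The control $|v_h|_{J,h}\leq C_0|v_h|_{E,h}$ from Lemma~\ref{lem_sta_cond} does not help either, since its proof relies on inverse estimates and applies only to $v_h\in V_h$, not to $u-u_I$. A direct $H^{1+\beta}$ bound on $|u-u_I|_{J,h}$, uniform in $\delta$, does not appear to be available for the full kernel class.

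The paper sidesteps the difficulty entirely by an operator-interpolation argument rather than a direct approximation estimate. One observes that the map $T:u\mapsto u-u_h$ is linear and satisfies two endpoint bounds with $\delta$-independent constants: $\|Tu\|_{L^2}\leq Ch\,\|u\|_{H^2}$ from the standard error estimate \eqref{eqn_errest_2}, and $\|Tu\|_{L^2}\leq C\,\|u\|_{H^1}$ obtained by taking $v_h=0$ in the Strang-type bound $\|u-u_h\|_{L^2}\leq\|u-v_h\|_{L^2}+C\,|||u-v_h|||$ and using $|||u|||=\|u\|_{\mathcal S}\leq C\|u\|_{H^1}$. Interpolating these two bounds in Sobolev scale (Lemma~\ref{lem_interpolation}) yields $\|u-u_h\|_{L^2}\leq Ch^\beta\|u\|_{H^{1+\beta}}$ directly, without ever touching $|u-u_I|_{J,h}$. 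The key idea you are missing is this trivial $H^1$ endpoint (choosing $v_h=0$) combined with abstract interpolation; once you have it, the $J$-seminorm issue evaporates.
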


\begin{rem}
The assumption of the solution $u \in H^{1 + \beta}\big( \widetilde{\Omega} \big)$, $0 < \beta < 1$ is valid for the ND problem \eqref{eqn_model_tind} with a 
truncated fractional kernel. The $H^{1+\beta}$ regularity of  solution of such problem with any fixed $\delta$ was shown in  {\rm \cite[Theorem 3.4]{BurkovskaJMAA2019}}.  However, it is quite difficult to obtain a $\delta$-independent bound for the $H^{1 + \beta}$ norm of $u$ and thus the uniform estimate in $\delta$ of $H^{1+\beta}$ regularity still remains an open question to be explored. 
\end{rem}

To prove the Theorem \ref{thm_ac}, we first introduce a result in \cite[Chap. 14]{Susanne2008} as follows, 
which plays a key role in obtaining the asymptotic compatibility in the fractional space.  

\begin{lem} \label{lem_interpolation}
Suppose $T$ is a linear operator that maps $H^p(\Omega)$ to $L^2(\Omega)$ and 
$H^q(\Omega)$ to $L^2(\Omega)$, where $p, q$ are some positive integers.
then $T$ maps $H^{(1-\theta)p + \theta q}(\Omega)$ to $L^2(\Omega)$, $\forall \, 0 < \theta < 1$. Moreover,
\begin{align*}
\| T\|_{ H^{(1-\theta)p + \theta q} \to L^2 }  \leq \|T \|^{1 - \theta}_{H^p \to L^2} \, \|T \|^{\theta}_{H^q \to L^2} . 
\end{align*}
\end{lem}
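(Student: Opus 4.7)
The plan is to derive the inequality via complex interpolation, after identifying the fractional space $H^s(\Omega)$ with $s = (1-\theta)p + \theta q$ as the complex interpolation space $[H^p(\Omega), H^q(\Omega)]_\theta$. Set $M_0 = \|T\|_{H^p \to L^2}$ and $M_1 = \|T\|_{H^q \to L^2}$; without loss of generality $p < q$.

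First, the abstract interpolation theorem of Calder\'on--Stein, applied to the constant analytic family $z \mapsto T$, gives, for any compatible Banach couples $(X_0, X_1)$ and $(Y_0, Y_1)$, the norm bound
$$
\|T\|_{[X_0, X_1]_\theta \to [Y_0, Y_1]_\theta} \leq \|T\|_{X_0 \to Y_0}^{1-\theta}\, \|T\|_{X_1 \to Y_1}^{\theta}.
$$
Applying this with $X_0 = H^p(\Omega)$, $X_1 = H^q(\Omega)$, $Y_0 = Y_1 = L^2(\Omega)$, and using the trivial identity $[L^2, L^2]_\theta = L^2$ with equal norms, I immediately get
$$
\|T\|_{[H^p, H^q]_\theta \to L^2} \leq M_0^{1-\theta}\, M_1^\theta.
$$
I would then spend a few sentences recalling the definition of $[X_0, X_1]_\theta$ via bounded $X_0 + X_1$-valued analytic functions on the strip, since the proof of the displayed bound is itself a direct application of Hadamard's three-lines lemma to the $L^2$-valued analytic function $z \mapsto T(F(z))$ built from any admissible representative of $u$.

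Second, I need the identification $[H^p(\Omega), H^q(\Omega)]_\theta = H^s(\Omega)$ with equivalent norms. This is the main technical content. I would prove it in two stages: (i) invoke the Fourier characterization $\|v\|_{H^s(\mathbb{R})}^2 = \int (1 + |\xi|^2)^s |\hat v(\xi)|^2\, \dd\xi$ to reduce the identity on the whole line to a weighted $L^2$-interpolation identity between Hilbert spaces, for which complex interpolation gives an exact equality; and (ii) transfer to the bounded domain $\Omega$ via a retraction--coretraction pair consisting of a Sobolev extension operator $\mathcal{E}$ bounded simultaneously from $H^p(\Omega)$ and $H^q(\Omega)$ to their whole-line counterparts, paired with the restriction map. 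The retraction--coretraction principle shows that both applications commute with the complex interpolation functor, giving the claimed identification. Combined with the preceding norm bound, this yields the lemma.

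The main obstacle is the construction and simultaneous boundedness of the extension operator $\mathcal{E}:H^p(\Omega)\cap H^q(\Omega) \to H^p(\mathbb{R})\cap H^q(\mathbb{R})$ required in stage (ii). For a bounded interval (the setting of this paper) it is straightforward, using reflection across the endpoints with a smooth cutoff, but the statement requires us to know the extension bounds the $H^p$ and $H^q$ norms by the \emph{same} operator, which is where the retraction--coretraction formalism becomes essential. With that tool in hand the rest is routine. If one is willing to accept an additional $\theta$-dependent multiplicative constant in the inequality, the same result can be reached by the real K-method: for the K-functional $K(t,u) = \inf\{\|v\|_{H^p}+t\|w\|_{H^q}:u=v+w\}$, push the bound through $T$ to obtain $\|Tu\|_{L^2}\leq M_0 K(M_1/M_0, u)$, then bound $K(t,u)\leq C\,t^\theta\|u\|_{H^s}$ via a mollifier decomposition balanced at $\varepsilon^{q-p}=t$. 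The extra constant is harmless for the use of this lemma in Theorem \ref{thm_ac}.
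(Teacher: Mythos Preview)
The paper does not supply its own proof of this lemma: it simply quotes the result from Chapter~14 of Brenner and Scott \cite{Susanne2008}. So there is no in-paper argument to compare against.

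Your outline via complex interpolation is a correct and standard route. The Calder\'on three-lines bound combined with the identification $[H^p(\Omega),H^q(\Omega)]_\theta = H^{(1-\theta)p+\theta q}(\Omega)$, obtained by Fourier characterization on $\mathbb{R}$ and an extension/restriction retraction pair on the bounded interval, gives exactly what is claimed. One minor caveat: the retraction--coretraction step yields equivalence of norms, not equality, so strictly speaking your argument produces the inequality only up to a constant depending on the extension operator; you acknowledge this for the $K$-method alternative but not for the complex route. This is harmless for the application in Theorem~\ref{thm_ac}, where only $\|u-u_h\|_{L^2}\le Ch^\beta\|u\|_{H^{1+\beta}}$ with some $C$ is needed. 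For what it is worth, the cited reference develops the result through a Hilbert-scale (essentially $K$-method) construction rather than the complex method, so your alternative paragraph is actually closer in spirit to the source than your primary argument.
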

 In the proof of Theorem \ref{thm_ac}, we would like to consider the interpolation 
between $\mathcal{L}(H^1_0(\Omega) \cap H^2(\Omega), L^2(\Omega))$ 
and $\mathcal{L}(H_0^1(\Omega), L^2(\Omega))$, and this is the case when $p = 1, q = 2$ in Lemma \ref{lem_interpolation}. 

 \begin{proof}[Proof of Theorem \ref{thm_ac}:]
 	From the discrete Poincar\'e's inequality \eqref{eqn_poincare} and \eqref{eqn_errest_1}, we have 
$$
\| u_I - u_h \|_{L^2} \leq C \,||| u_I - u_h ||| \leq  C \,||| u_I - u |||. 
$$
Therefore, by the triangle inequality and \eqref{eqn_poincare}, we can obtain  
$$
\| u - u_h \|_{L^2} \leq \| u - u_I \|_{L^2} + \| u_I - u_h \|_{L^2} 
\leq C \,||| u_I - u ||| \, ,
$$
where $u_I \in V_h$ is the continuous interpolant of the exact solution $u$. 
For $u \in H^2 \big( \widetilde{\Omega} \big)$, from \eqref{eqn_errest_2} we have 
\[
\| u - u_h \|_{L^2} \leq  C h \| u \|_{H^2}. 
\]
Now we claim that 
\begin{align} \label{err_bd_H1}
\| u - u_h \|_{L^2} \leq C \,\| u\|_{H^1}. 
\end{align}
If \eqref{err_bd_H1} holds true, then we can denote a linear operator $Tu: = u - u_h$ and obtain 
\begin{align*}
& \| T \|_{H^1 \to L^2} \leq C, \quad  \| T \|_{H^2 \to L^2} \leq C h. 
\end{align*}
With $p = 1, q = 2$ in the Lemma \ref{lem_interpolation}, we conclude
\begin{align*}
& \| T \|_{H^{1 + \beta} \to L^2} \leq C h^{\beta} ,  \quad \forall \, 0 < \beta < 1. 
\end{align*}
which indicates $\|u - u_h\|_{L^2} \leq C h^{\beta} \| u \|_{H^{1 + \beta}}$, 
where $C$ is independent of $\delta$ and $h$. Therefore, $\|u - u_h\|_{L^2} \to 0$ as $h \to 0$. 
since $\| u_{loc} - u \|_{L^2} \to 0$ as $\delta \to 0$, then 
\[
\| u_{loc} - u_h \|_{L^2} \leq \| u_{loc} - u \|_{L^2} + \| u - u_h \|_{L^2} \to 0, \quad \text{as } \delta, h \to 0 .
\]
This indicates the DG methods \eqref{dgscm} 
are asymptotically compatible for the exact solution $u \in H^{1 + \beta}\big( \widetilde{\Omega} \big)$, 
$ 0 < \beta < 1$. 

It remains to prove \eqref{err_bd_H1}.  
For the consistent DG method nIP and nNIPG, for any $v_h \in V_h$ we have 
\begin{align} \label{err_bd_H1_1} 
\begin{aligned}
C_s ||| v_h - u_h |||^2 & \,\leq  B_h( v_h - u_h, v_h - u_h) \\
& \, = B_h(v_h - u, v_h - u_h) + B_h( u - u_h, v_h - u_h) \\
& \, \leq 2 \, ||| v_h - u ||| \, ||| v_h - u_h |||, 
\end{aligned}
\end{align}
which implies 
$$
||| v_h - u_h ||| \leq \frac{2}{C_s} ||| v_h - u |||, \quad \forall \, v_h \in V_h. 
$$
Together with the Proposition \ref{prop_discrete_poincare}, we have 
\[
\| v_h - u_h \|_{L^2} \leq C \,||| v_h - u_h ||| \leq C \,||| v_h - u |||. 
\]
By triangle inequality and above inequality, we obtain 
\begin{align} \label{err_bd_H1_2} 
\begin{aligned}
\| u - u_h \|_{L^2} & \, \leq \| u - v_h \|_{L^2} + \| v_h - u_h \|_{L^2} \\
& \, \leq \| u - v_h \|_{L^2} + C \,||| v_h - u |||. 
\end{aligned}
\end{align}
Since \eqref{err_bd_H1_2} holds true for any $v_h \in V_h$, then we take $v_h = 0$ 
and obtain 
\[
\| u - u_h \|_{L^2}  \leq  \| u \|_{L^2} + C \,||| u ||| =  \| u \|_{L^2} + C \,\| u \|_{\mathcal{S}} \leq C \, \| u\|_{H^1}, 
\]
which proves the claim \eqref{err_bd_H1}.  The estimate $\|u \|_{\mathcal{S}} \leq C \|u\|_{H^1}$ 
used in the last inequality can be found in, e.g., \cite{BBM2001}. 

\end{proof}

\subsection{Application to convection-diffusion problems}
\label{sec_convection_diffusion}

In this subsection, we consider the time-dependent convection-diffusion problem 
with nonlocal diffusion as follows:
\begin{align} \label{eqn_cd}
\left\{ 
\begin{aligned} 
& u_t + f(u)_x + \sigma \mathcal{L}_\delta u = f_s, \quad x \in \Omega, \; t > 0, \\
& u(x, 0) = u_0(x),  \quad  x \in \Omega,  
\end{aligned} \right. 
\end{align}
with periodic or compactly supported boundary conditions. $f(u)$ is the flux function 
and $f_s = f_s(x, t)$ is the source function. 
In the numerical approximation of convection-dominated problems, 
one of the computational challenges is the sharp transitions of the numerical solution. 
In particular, when $\sigma = 0$ the solution of \eqref{eqn_cd} may evolve into 
shock discontinuities even with the smooth initial condition. There are many studies 
on the DG discretization of $f(u)_x$, see e.g. \cite{Shu2009} and the references therein. 
Now assume we have the partition of the domain the same as in \eqref{1dmesh}, we then construct 
the semi-discrete DG methods for \eqref{eqn_cd}: seek $u_h(\cdot, t) \in V_h$ such that 
\begin{align} \label{dgscm_td}
\int_{I_j} (u_h)_t v_h \,\dd x + A_j(u_h, v_h) + \sigma B_{h, j}(u_h, v_h) = \int_{I_j} f_s v_h \,\dd x, 
\quad \forall \, v_h \in V_h, 
\end{align}
where $A_j(u_h, v_h)$ is defined as 
\begin{align*}
A_j(u_h, v_h) = \hat{f}_{\jr} (v_h)_{\jr}^- - \hat{f}_{\jl} (v_h)_{\jl}^+ - \int_{I_j} f(u_h) (v_h)_x \,\dd x
\end{align*}
with $\hat{f}_{\jr}$ is the monotone flux and $ \sum_j B_{h, j}(u_h, v_h) = B_h(u_h, v_h) $
with $B_h(u_h, v_h)$ defined in \eqref{eqn_bilinear_form3}. 
Take $v_h = u_h$ in \eqref{dgscm_td} and sum it over $j$, we have 
\[
\frac{\dd}{2 \dd t} \| u_h(\cdot, t) \|_{L^2}^2 + \sum_j A_j(u_h, u_h) + \sigma B_h(u_h, u_h) = (f_s, u_h). 
\] 
Denote $\displaystyle F(u) = \int^u f(s) \, \dd s$, we then have 
\begin{align*}
 \sum_j A_j(u_h, u_h) & \,=  \sum_j  \Big( \hat{f}_{\jr} (u_h)_{\jr}^- - \hat{f}_{\jl} (u_h)_{\jl}^+ 
 - \big( F((u_h)_{\jr}^-) - F((u_h)_{\jl}^+ ) \big) \Big) \\
 & \, = \sum_j  \Big( \hat{f}_{\jr} (u_h)_{\jr}^- - \hat{f}_{\jr} (u_h)_{\jr}^+ 
 - \big( F((u_h)_{\jr}^-) - F((u_h)_{\jr}^+ ) \big) \Big)  \\
 & \, = \sum_j \int_{(u_h)_{\jr}^-}^{(u_h)_{\jr}^+} \Big( f(u) - \hat{f}\big( (u_h)_{\jr}^-, (u_h)_{\jr}^+ 
\big) \Big) \, . 
\end{align*}
By the above semi-discrete analysis, we can obtain the $L^2$-boundedness of the numerical solution, 
stated in the following theorem. 
\begin{thm} \label{L2_bound}
For the DG methods \eqref{dgscm_td}, the numerical solution satisfies 
\begin{align} \label{eqn_L2_bound} 
\frac{\dd}{2 \dd t} \| u_h(\cdot, t) \|_{L^2}^2 + \sum_j \Theta_j + \sigma B_h(u_h, u_h) = (f_s, u_h), 
\end{align}
where $\Theta_j$ is given as 
\begin{align*}
\Theta_j = \int_{(u_h)_{\jr}^-}^{(u_h)_{\jr}^+} \Big( f(u) - \hat{f}\big( (u_h)_{\jr}^-, (u_h)_{\jr}^+ 
\big) \Big) \, \dd u \geq 0.
\end{align*} 
\end{thm}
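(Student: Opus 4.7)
\medskip

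\noindent\textbf{Proof proposal.} The plan is essentially to take $v_h=u_h$ in the semi-discrete DG formulation \eqref{dgscm_td} and sum over the mesh, then identify each resulting piece. Most of the scaffolding already appears in the display preceding the theorem; the task is to package it cleanly and to verify the sign statement $\Theta_j\ge 0$.

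First, substituting $v_h = u_h$ into \eqref{dgscm_td} and summing over $j$ produces three contributions on the left. The time-derivative term collapses to $\tfrac{d}{2\,dt}\|u_h(\cdot,t)\|_{L^2}^2$ because $\int_{I_j}(u_h)_t u_h\,\dd x = \tfrac{1}{2}\tfrac{d}{dt}\int_{I_j}u_h^2\,\dd x$. The nonlocal diffusion contribution is $\sigma\sum_j B_{h,j}(u_h,u_h)=\sigma B_h(u_h,u_h)$ by the stipulated decomposition. The source term is immediately $(f_s,u_h)$. So the whole identity \eqref{eqn_L2_bound} will follow once I show $\sum_j A_j(u_h,u_h)=\sum_j \Theta_j$ with $\Theta_j\ge 0$.

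For the convection term I would introduce the antiderivative $F(u)=\int^u f(s)\,\dd s$ and use the pointwise identity $f(u_h)(u_h)_x = \partial_x F(u_h)$ inside each element to obtain
\[
\int_{I_j} f(u_h)(u_h)_x\,\dd x = F\bigl((u_h)_{\jr}^-\bigr) - F\bigl((u_h)_{\jl}^+\bigr).
\]
Plugging this into $A_j(u_h,u_h)$ and summing over $j$, the edge values at $x_{\jl}$ coming from element $I_j$ pair up with those at $x_{\jl}$ coming from element $I_{j-1}$ (using periodic or compactly supported boundary conditions to discard boundary contributions). After this reindexing the sum regroups neatly, edge by edge, as
\[
\sum_j \Bigl(\hat{f}_{\jr}\bigl((u_h)_{\jr}^- - (u_h)_{\jr}^+\bigr) - \bigl(F((u_h)_{\jr}^-) - F((u_h)_{\jr}^+)\bigr)\Bigr),
\]
and the definition of $F$ converts each edge summand into the single integral
\[
\Theta_j = \int_{(u_h)_{\jr}^-}^{(u_h)_{\jr}^+}\!\Bigl(f(u) - \hat{f}\bigl((u_h)_{\jr}^-,(u_h)_{\jr}^+\bigr)\Bigr)\dd u,
\]
exactly as in the theorem.

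The remaining step, and the only place that requires genuine argument rather than bookkeeping, is $\Theta_j\ge 0$. I would invoke the monotonicity and consistency of $\hat f$: $\hat{f}(a,b)$ is nondecreasing in $a$, nonincreasing in $b$, and $\hat{f}(u,u)=f(u)$. Writing $a=(u_h)_{\jr}^-$, $b=(u_h)_{\jr}^+$, one then compares $f(u)$ with $\hat{f}(a,b)$ on the interval of integration in the two cases $a\le b$ and $a>b$; monotonicity gives $f(u)\ge \hat{f}(a,b)$ in the first case (when the integration goes upward) and $f(u)\le \hat{f}(a,b)$ in the second (when the integration goes downward), so in both cases $\Theta_j\ge 0$. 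This is the only nontrivial obstacle, and it is a standard cell-entropy-type calculation going back to Jiang--Shu; no new nonlocal ingredient is needed. Combining this with the three terms identified above yields \eqref{eqn_L2_bound}.
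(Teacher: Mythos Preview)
Your proposal is correct and follows essentially the same route as the paper: set $v_h=u_h$, sum over $j$, introduce the antiderivative $F$, reindex the flux terms to pair them at each interface, and identify $\Theta_j$. The paper actually proves less than you do---it simply asserts $\Theta_j\ge 0$ by citing the monotone flux property and referring to \cite{Shu2009}, whereas you spell out the case split $a\le b$ versus $a>b$ explicitly.
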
 
Thanks to the monotone flux, we are able to obtain $\Theta_j \geq 0$. We refer the readers to e.g. 
\cite{Shu2009} for more details. From the previous analysis, we have 
$B_h(u_h, u_h) \geq C_s ||| u_h |||^2 \geq C \| u_h \|^2$. Therefore, when $f_s = 0$, we can see that the numerical solution decays exponentially as time evolves.  

\begin{rem} \label{rmk_imex} 
For the time-dependent convection-diffusion problems, we recommend treating the diffusion term 
implicitly to have a relaxed CFL condition. 
Particularly, when both convection and diffusion co-exist, we could use the implicit-explicit {\rm (}IMEX{\rm )} 
Runge-Kutta method, such that the convection term could be treated explicitly 
and the diffusion term could be treated implicitly. 
\end{rem}

 \section{Numerical experiments}
\label{sec_numerics}

In this section, we show some numerical results to validate the theoretical 
results presented in the previous section. For simplicity, we take the uniform mesh, 
i.e. $h_j = h$. We consider the kernel function 
as 
$$
\gamma_{\delta}(s) = \frac{3 - \alpha}{2 \delta^{3 - \alpha}} |s|^{-\alpha} \quad \text{ on } 
(-\delta, \delta). 
$$
Then $s^2 \gamma_\delta(s)$ is integrable for $\alpha < 3$. In the numerical tests, 
we take $\alpha = 1/2, \, 5/2$ such that $\gamma_\delta$ could have different 
kinds of singularities. We use the five-point Gauss-Legendre quadrature when computing the integrals for $s>\hat{h}$. And for those integrals that $s$ close to $0$, 
we use the exact integration because they are the improper integrals involving singularities. 
For both nIP and nNIPG methods, we take the penalty parameter $\mu$ as $\mu = 5/h$ unless otherwise specified. 
We choose different values of $\delta$ in the numerical tests to show the good performance 
of the proposed numerical methods. For the time-dependent problems, we adopt the nIP method 
for nonlocal diffusion term, and the time discretization method is the 4th order 
implicit-explicit Runge-Kutta method with 6 stages \cite{CalvoANM2001}. Since we treat the convective term explicitly and the diffusive term implicitly, we take the CFL condition as $\tau = O(h)$ where $\tau$ is the time step.  Specifically, the CFL number is taken as $\frac{0.3}{ 2k + 1} $ for the numerical simulation of the time-dependent convection-diffusion equation.

\begin{exmp} \label{exp1}
For the steady-state problem \eqref{eqn_model_tind},
we take the source term  as
$$
f_\delta(x) = - 2 \int_{-\delta}^\delta  \gamma_\delta(s) \big( g(x + s) - g(x) \big) \,
\dd s, \quad x \in (0, \pi), 
$$
where $g(x)$ is defined by
\begin{align*}
g(x) = \left\{
\begin{aligned}
\sin^6(x), & \qquad x \in (0, \pi), \\
0, & \qquad  \text{elsewhere.} 
\end{aligned}
\right.
\end{align*}
Thus the exact solution is $u(x) = g(x)$. The computational domain is $\Omega = (0, \pi)$. 
\end{exmp}

From the results reported in Tables \ref{tab_exp1_nip_k1} - \ref{tab_exp1_nip_k3}, we can see the optimal order of convergence
for nIP scheme \eqref{dgscm} and \eqref{eqn_nip} with various $\alpha$ and $\delta$ as the mesh is refined. 
We also observe the designed order of convergence for nNIPG scheme \eqref{dgscm} and \eqref{eqn_nnipg} for the results reported 
in Tables \ref{tab_exp1_nnipg_k1} - \ref{tab_exp1_nnipg_k3}. Note that the order is not optimal for an even 
degree in the classic NIPG method, which coincides with the case $\delta = 10^{-6}$ in Table \ref{tab_exp1_nnipg_k2}. However, for $\gamma_\delta(s)$ is locally integrable 
($\alpha = 1/2$) with fixed horizon $\delta = \pi/6 $, the energy norm is equivalent 
to the $L^2$ norm. Then, from Remark 3.1, we have the optimal convergence order of 3 
in the energy norm. The optimal rate of convergence can also be found in the case $\gamma_\delta(s)$ is locally integrable with horizon $\delta = \sqrt{h}$.

\begin{table}[!htbp] \small
\caption{\label{tab_exp1_nip_k1} $L^2$ errors and convergence orders produced by the nIP scheme 
\eqref{dgscm} and \eqref{eqn_nip} when $k = 1$ in Example 
\ref{exp1}.}\centering 
\begin{tabular}{|c|c||cc|cc|cc|cc|}
  \hline
\multirow{2}{*}{$\alpha$}
& \multirow{2}{*}{$N$}
& \multicolumn{2}{|c|}{$\delta = 10^{-6} $}
  & \multicolumn{2}{|c|}{$\delta = \pi/6$} & \multicolumn{2}{|c|}{$\delta = 2.5 h$} 
  & \multicolumn{2}{|c|}{$\delta = \sqrt{h}$}
 \\
  \cline{3-10}
 &  & $L^2$ error & order & $L^2$ error & order & $L^2$ error & order
 & $L^2$ error & order  \\ \hline\hline
& 24   & 3.996E-03 &  --       &  1.697E-03  &  --         &   1.706E-03  &  --        &  1.703E-03  &  --         \\
& 36   & 1.803E-03 & 1.963 &  7.483E-04 &  2.019  &   7.516E-04  &  2.022 &  7.502E-04  & 2.021   \\
& 48   & 1.019E-03 & 1.982 &  4.199E-04 &  2.008  &   4.214E-04  &  2.011 &  4.204E-04  & 2.013   \\
$ \frac 12$
& 60   & 6.540E-04 & 1.989 &  2.685E-04 &  2.004  &   2.693E-04  &  2.007 &  2.688E-04  & 2.004   \\
& 72   & 4.548E-04 & 1.993 &  1.864E-04 &  2.002  &   1.868E-04  &  2.005 &  1.865E-04  & 2.005   \\
& 84   & 3.344E-04 & 1.995 &  1.369E-04 &  2.002  &   1.372E-04  &  2.003 &  1.370E-04  & 2.003   \\ 
& 96   & 2.562E-04 & 1.996 &  1.048E-04 &  2.001  &   1.050E-04  &  2.002 &  1.049E-04  & 2.001   \\ 
   \hline
& 24   & 3.996E-03 &  --       &  1.998E-03  &  --         &   2.129E-03  &  --        &  2.096E-03  &  --         \\
& 36   & 1.803E-03 & 1.963 &  8.435E-04 &  2.126  &   9.417E-04  &  2.012 &  8.980E-04  & 2.090   \\
& 48   & 1.019E-03 & 1.982 &  4.613E-04 &  2.098  &   5.288E-04  &  2.006 &  4.939E-04  & 2.078   \\
$\frac 52$
& 60   & 6.540E-04 & 1.989 &  2.900E-04 &  2.080  &   3.381E-04  &  2.004 &  3.112E-04  & 2.071   \\
& 72   & 4.548E-04 & 1.993 &  1.990E-04 &  2.067  &   2.347E-04  &  2.002 &  2.136E-04  & 2.064   \\
& 84   & 3.344E-04 & 1.995 &  1.449E-04 &  2.058  &   1.724E-04  &  2.002 &  1.554E-04  & 2.060   \\ 
& 96   & 2.562E-04 & 1.996 &  1.102E-04 &  2.051  &   1.320E-04  &  2.001 &  1.181E-04  & 2.056   \\ 
   \hline
\end{tabular}
\end{table}

\begin{table}[!htbp] \small
\caption{\label{tab_exp1_nip_k2} $L^2$ errors and convergence orders produced by the nIP scheme 
\eqref{dgscm} and \eqref{eqn_nip} when $k = 2$ in Example 
\ref{exp1}.}\centering 
\begin{tabular}{|c|c||cc|cc|cc|cc|}
  \hline
\multirow{2}{*}{$\alpha$}
& \multirow{2}{*}{$N$}
& \multicolumn{2}{|c|}{$\delta = 10^{-6} $}
  & \multicolumn{2}{|c|}{$\delta = \pi/6$} & \multicolumn{2}{|c|}{$\delta = 2.5 h$} 
  & \multicolumn{2}{|c|}{$\delta = \sqrt{h}$}
 \\
  \cline{3-10}
 &  & $L^2$ error & order & $L^2$ error & order & $L^2$ error & order
 & $L^2$ error & order  \\ \hline\hline
& 24   & 1.797E-04 &  --       &  1.012E-04  &  --         &   1.049E-04  &  --        &  1.041E-04  &  --         \\
& 36   & 5.358E-05 & 2.984 &  2.936E-05 &  3.052  &   3.166E-05  &  2.954 &  3.090E-05  & 2.996   \\
& 48   & 2.266E-05 & 2.992 &  1.211E-05 &  3.079  &   1.345E-05  &  2.977 &  1.294E-05  & 3.025   \\
$ \frac 12$
& 60   & 1.161E-05 & 2.995 &  6.078E-06 &  3.089  &   6.906E-06  &  2.986 &  6.586E-06  & 3.026   \\
& 72   & 6.724E-06 & 2.997 &  3.458E-06 &  3.094  &   4.003E-06  &  2.991 &  3.773E-06  & 3.056   \\
& 84   & 4.236E-06 & 2.998 &  2.146E-06 &  3.096  &   2.524E-06  &  2.993 &  2.370E-06  & 3.016   \\ 
& 96   & 2.838E-06 & 2.998 &  1.419E-06 &  3.096  &   1.692E-06  &  2.995 &  1.575E-06  & 3.061   \\ 
   \hline
& 24   & 9.746E-05 &  --       &  7.991E-05  &  --         &   7.997E-05  &  --        &  7.995E-04  &  --         \\
& 36   & 2.855E-05 & 3.028 &  2.366E-05 &  3.002  &   2.368E-05  &  3.002 &  2.367E-05  & 3.002   \\
& 48   & 1.200E-05 & 3.014 &  9.978E-05 &  3.001  &   9.985E-05  &  3.001 &  9.980E-05  & 3.001   \\
$ \frac 52$
& 60   & 6.131E-05 & 3.008 &  5.108E-06 &  3.001  &   5.112E-06  &  3.001 &  5.109E-06  & 3.001   \\
& 72   & 3.545E-06 & 3.006 &  2.956E-06 &  3.000  &   2.958E-06  &  3.000 &  2.956E-06  & 3.001   \\
& 84   & 2.231E-06 & 3.004 &  1.861E-06 &  3.000  &   1.863E-06  &  3.000 &  1.862E-06  & 3.000   \\ 
& 96   & 1.494E-06 & 3.003 &  1.247E-06 &  3.000  &   1.248E-06  &  3.000 &  1.247E-06  & 3.000   \\ 
   \hline
\end{tabular}
\end{table}

\begin{table}[!htbp] \small
\caption{\label{tab_exp1_nip_k3} $L^2$ errors and convergence orders produced by the nIP scheme 
\eqref{dgscm} and \eqref{eqn_nip} when $k = 3$ in Example 
\ref{exp1}.}\centering 
\begin{tabular}{|c|c||cc|cc|cc|cc|}
  \hline
\multirow{2}{*}{$\alpha$}
& \multirow{2}{*}{$N$}
& \multicolumn{2}{|c|}{$\delta = 10^{-6} $}
  & \multicolumn{2}{|c|}{$\delta = \pi/6$} & \multicolumn{2}{|c|}{$\delta = 2.5 h$} 
  & \multicolumn{2}{|c|}{$\delta = \sqrt{h}$}
 \\
  \cline{3-10}
 &  & $L^2$ error & order & $L^2$ error & order & $L^2$ error & order
 & $L^2$ error & order  \\ \hline\hline
& 24   & 1.189E-05 &  --       &  2.668E-06  &  --         &   2.672E-06  &  --        &  2.672E-06  &  --         \\
& 36   & 1.357E-06 & 5.353 &  5.198E-07 &  4.034  &   5.206E-07  &  4.034 &  5.204E-07  & 4.035   \\
& 48   & 3.576E-07 & 4.635 &  1.637E-07 &  4.016  &   1.639E-07  &  4.017 &  1.638E-07  & 4.017   \\
$ \frac 12$
& 60   & 1.339E-07 & 4.403 &  6.692E-08 &  4.009  &   6.699E-08  &  4.010 &  6.696E-08  & 4.010   \\
& 72   & 6.112E-08 & 4.300 &  3.224E-08 &  4.006  &   3.226E-08  &  4.007 &  3.225E-08  & 4.007   \\
& 84   & 3.194E-08 & 4.209 &  1.739E-08 &  4.004  &   1.740E-08  &  4.005 &  1.740E-08  & 4.004   \\ 
& 96   & 1.834E-08 & 4.157 &  1.019E-08 &  4.003  &   1.020E-08  &  4.004 &  1.019E-08  & 4.004   \\ 
   \hline
& 24   & 1.189E-05 &  --       &  3.182E-06  &  --         &   3.185E-06  &  --        &  3.184E-06  &  --         \\
& 36   & 1.357E-06 & 5.353 &  6.338E-07 &  3.980  &   6.345E-07  &  3.979 &  6.342E-07  & 3.980   \\
& 48   & 3.576E-07 & 4.635 &  2.011E-07 &  3.990  &   2.014E-07  &  3.990 &  2.012E-07  & 3.991   \\
$ \frac 52$
& 60   & 1.339E-07 & 4.403 &  8.246E-08 &  3.994  &   8.259E-08  &  3.994 &  8.251E-08  & 3.994   \\
& 72   & 6.112E-08 & 4.300 &  3.979E-08 &  3.996  &   3.986E-08  &  3.995 &  3.982E-08  & 3.996   \\
& 84   & 3.194E-08 & 4.209 &  2.149E-08 &  3.997  &   2.154E-08  &  3.995 &  2.150E-08  & 3.996   \\ 
& 96   & 1.834E-08 & 4.157 &  1.260E-08 &  3.997  &   1.264E-08  &  3.990 &  1.261E-08  & 3.995   \\ 
   \hline
\end{tabular}
\end{table}

\begin{table}[!htbp] \small
\caption{\label{tab_exp1_nnipg_k1} $L^2$ errors and convergence orders produced by the nNIPG scheme 
\eqref{dgscm} and \eqref{eqn_nnipg} when $k = 1$ in Example 
\ref{exp1}.}\centering 
\begin{tabular}{|c|c||cc|cc|cc|cc|}
  \hline
\multirow{2}{*}{$\alpha$}
& \multirow{2}{*}{$N$}
& \multicolumn{2}{|c|}{$\delta = 10^{-6} $}
  & \multicolumn{2}{|c|}{$\delta = \pi/6$} & \multicolumn{2}{|c|}{$\delta = 2.5 h$} 
  & \multicolumn{2}{|c|}{$\delta = \sqrt{h}$}
 \\
  \cline{3-10}
 &  & $L^2$ error & order & $L^2$ error & order & $L^2$ error & order
 & $L^2$ error & order  \\ \hline\hline
& 24   & 2.107E-03 &  --       &  1.704E-03  &  --         &   1.711E-03  &  --        &  1.709E-03  &  --         \\
& 36   & 9.325E-04 & 2.011 &  7.498E-04 &  2.025  &   7.519E-04  &  2.028 &  7.513E-04  & 2.026   \\
& 48   & 5.237E-04 & 2.005 &  4.204E-04 &  2.011  &   4.213E-04  &  2.014 &  4.208E-04  & 2.015   \\
$ \frac 12$
& 60   & 3.349E-04 & 2.003 &  2.687E-04 &  2.006  &   2.691E-04  &  2.008 &  2.690E-04  & 2.006   \\
& 72   & 2.325E-04 & 2.002 &  1.865E-04 &  2.004  &   1.867E-04  &  2.006 &  1.866E-04  & 2.006   \\
& 84   & 1.708E-04 & 2.002 &  1.369E-04 &  2.003  &   1.371E-04  &  2.004 &  1.370E-04  & 2.003   \\ 
& 96   & 1.307E-04 & 2.001 &  1.048E-04 &  2.002  &   1.049E-04  &  2.003 &  1.049E-04  & 2.002   \\ 
   \hline
& 24   & 2.107E-03 &  --       &  1.710E-03  &  --         &   1.716E-03  &  --        &  1.714E-03  &  --         \\
& 36   & 9.325E-04 & 2.011 &  7.527E-04 &  2.024  &   7.567E-04  &  2.020 &  7.548E-04  & 2.023   \\
& 48   & 5.237E-04 & 2.005 &  4.218E-04 &  2.013  &   4.244E-04  &  2.010 &  4.230E-04  & 2.013   \\
$ \frac 52$
& 60   & 3.349E-04 & 2.003 &  2.695E-04 &  2.008  &   2.713E-04  &  2.005 &  2.702E-04  & 2.008   \\
& 72   & 2.325E-04 & 2.002 &  1.869E-04 &  2.006  &   1.883E-04  &  2.004 &  1.874E-04  & 2.007   \\
& 84   & 1.708E-04 & 2.002 &  1.372E-04 &  2.005  &   1.382E-04  &  2.003 &  1.376E-04  & 2.005   \\ 
& 96   & 1.307E-04 & 2.001 &  1.050E-04 &  2.004  &   1.058E-04  &  2.002 &  1.053E-04  & 2.004   \\ 
   \hline
\end{tabular}
\end{table}

\begin{table}[!htbp] \small
\caption{\label{tab_exp1_nnipg_k2} $L^2$ errors and convergence orders produced by the nNIPG scheme 
\eqref{dgscm} and \eqref{eqn_nnipg} when $k = 2$ in Example 
\ref{exp1}.}\centering 
\begin{tabular}{|c|c||cc|cc|cc|cc|}
  \hline
\multirow{2}{*}{$\alpha$}
& \multirow{2}{*}{$N$}
& \multicolumn{2}{|c|}{$\delta = 10^{-6} $}
  & \multicolumn{2}{|c|}{$\delta = \pi/6$} & \multicolumn{2}{|c|}{$\delta = 2.5 h$} 
  & \multicolumn{2}{|c|}{$\delta = \sqrt{h}$}
 \\
  \cline{3-10}
 &  & $L^2$ error & order & $L^2$ error & order & $L^2$ error & order
 & $L^2$ error & order  \\ \hline\hline
& 24   & 5.449E-04 &  --       &  1.085E-04  &  --         &   1.136E-04  &  --        &  1.123E-04  &  --         \\
& 36   & 2.398E-04 & 2.024 &  3.168E-05 &  3.036  &   3.541E-05  &  2.876 &  3.348E-05  & 2.984   \\
& 48   & 1.345E-04 & 2.012 &  1.312E-05 &  3.065  &   1.565E-05  &  2.839 &  1.409E-05  & 3.008   \\
$ \frac 12$
& 60   & 8.591E-05 & 2.007 &  6.600E-06 &  3.077  &   8.417E-06  &  2.779 &  7.179E-06  & 3.023   \\
& 72   & 5.961E-05 & 2.005 &  3.761E-06 &  3.084  &   5.134E-06  &  2.711 &  4.127E-06  & 3.036   \\
& 84   & 4.377E-05 & 2.003 &  2.337E-06 &  3.088  &   3.416E-06  &  2.642 &  2.592E-06  & 3.018   \\ 
& 96   & 3.350E-05 & 2.003 &  1.546E-06 &  3.091  &   2.422E-06  &  2.577 &  1.726E-06  & 3.047   \\ 
   \hline
& 24   & 4.259E-04 &  --       &  3.580E-04  &  --         &   4.305E-04  &  --        &  4.128E-04  &  --         \\
& 36   & 1.870E-04 & 2.030 &  1.311E-04 &  2.478  &   1.899E-04  &  2.018 &  1.655E-04  & 2.254   \\
& 48   & 1.047E-04 & 2.015 &  6.397E-05 &  2.494  &   1.065E-04  &  2.009 &  8.640E-05  & 2.259   \\
$ \frac 52$
& 60   & 6.688E-05 & 2.009 &  3.662E-05 &  2.500  &   6.810E-05  &  2.006 &  5.218E-05  & 2.260   \\
& 72   & 4.639E-05 & 2.006 &  2.321E-05 &  2.502  &   4.726E-05  &  2.004 &  3.456E-05  & 2.259   \\
& 84   & 3.406E-05 & 2.004 &  1.578E-05 &  2.503  &   3.470E-05  &  2.003 &  2.440E-05  & 2.259   \\ 
& 96   & 2.607E-05 & 2.003 &  1.130E-05 &  2.503  &   2.656E-05  &  2.002 &  1.805E-05  & 2.258   \\ 
   \hline
\end{tabular}
\end{table}

\begin{table}[!htbp] \small
\caption{\label{tab_exp1_nnipg_k3} $L^2$ errors and convergence orders produced by the nNIPG scheme 
\eqref{dgscm} and \eqref{eqn_nnipg} when $k = 3$ in Example 
\ref{exp1}.}\centering 
\begin{tabular}{|c|c||cc|cc|cc|cc|}
  \hline
\multirow{2}{*}{$\alpha$}
& \multirow{2}{*}{$N$}
& \multicolumn{2}{|c|}{$\delta = 10^{-6} $}
  & \multicolumn{2}{|c|}{$\delta = \pi/6$} & \multicolumn{2}{|c|}{$\delta = 2.5 h$} 
  & \multicolumn{2}{|c|}{$\delta = \sqrt{h}$}
 \\
  \cline{3-10}
 &  & $L^2$ error & order & $L^2$ error & order & $L^2$ error & order
 & $L^2$ error & order  \\ \hline\hline
& 24   & 9.843E-06 &  --       &  2.678E-06  &  --         &   2.683E-06  &  --        &  2.682E-06  &  --         \\
& 36   & 1.915E-06 & 4.037 &  5.208E-07 &  4.038  &   5.217E-07  &  4.039 &  5.213E-07  & 4.039   \\
& 48   & 6.027E-07 & 4.019 &  1.639E-07 &  4.019  &   1.641E-07  &  4.020 &  1.640E-07  & 4.020   \\
$ \frac 12$
& 60   & 2.462E-07 & 4.011 &  6.697E-08 &  4.011  &   6.706E-08  &  4.012 &  6.701E-08  & 4.011   \\
& 72   & 1.186E-07 & 4.008 &  3.225E-08 &  4.007  &   3.229E-08  &  4.008 &  3.227E-08  & 4.008   \\
& 84   & 6.396E-08 & 4.005 &  1.740E-08 &  4.005  &   1.741E-08  &  4.006 &  1.740E-08  & 4.005   \\ 
& 96   & 3.747E-08 & 4.004 &  1.019E-08 &  4.004  &   1.020E-08  &  4.004 &  1.020E-08  & 4.004   \\ 
   \hline
& 24   & 9.843E-06 &  --       &  4.049E-06  &  --         &   4.311E-06  &  --        &  4.242E-06  &  --         \\
& 36   & 1.915E-06 & 4.037 &  7.450E-07 &  4.175  &   8.337E-07  &  4.052 &  7.933E-07  & 4.135   \\
& 48   & 6.027E-07 & 4.019 &  2.271E-07 &  4.129  &   2.617E-07  &  4.028 &  2.435E-07  & 4.106   \\
$ \frac 52$
& 60   & 2.463E-07 & 4.011 &  9.091E-08 &  4.103  &   1.067E-08  &  4.019 &  9.771E-08  & 4.092   \\
& 72   & 1.186E-07 & 4.008 &  4.315E-08 &  4.087  &   5.130E-08  &  4.018 &  4.640E-08  & 4.084   \\
& 84   & 6.396E-08 & 4.005 &  2.302E-08 &  4.076  &   2.760E-08  &  4.022 &  2.474E-08  & 4.080   \\ 
& 96   & 3.747E-08 & 4.004 &  1.337E-08 &  4.068  &   1.612E-08  &  4.029 &  1.435E-08  & 4.079   \\ 
   \hline
\end{tabular}
\end{table}

\begin{exmp} \label{exp2} 
Consider a non-smooth case for \eqref{eqn_model_tind}. We take the locally integrable kernel 
$\gamma_\delta$ that $\delta =1/8$, $\alpha = 1/2$. $g(x)$ is taken as 
\begin{align*}
g(x) = \left\{
\begin{aligned}
1, & \qquad x \in \Big( \frac 14, \frac 34 \Big), \\
0, & \qquad  \text{elsewhere.} 
\end{aligned}  \right.
\end{align*} 
With the definition of $f_\delta$ similar in the Example {\rm \ref{exp1}}, we have the solution $u(x) = g(x)$. 
The computational domain is $\Omega = (0, 1)$. 
\end{exmp} 

\begin{figure}[!htbp]
\subfigure[$k = 1$]{ 
\includegraphics[width=0.33\textwidth]{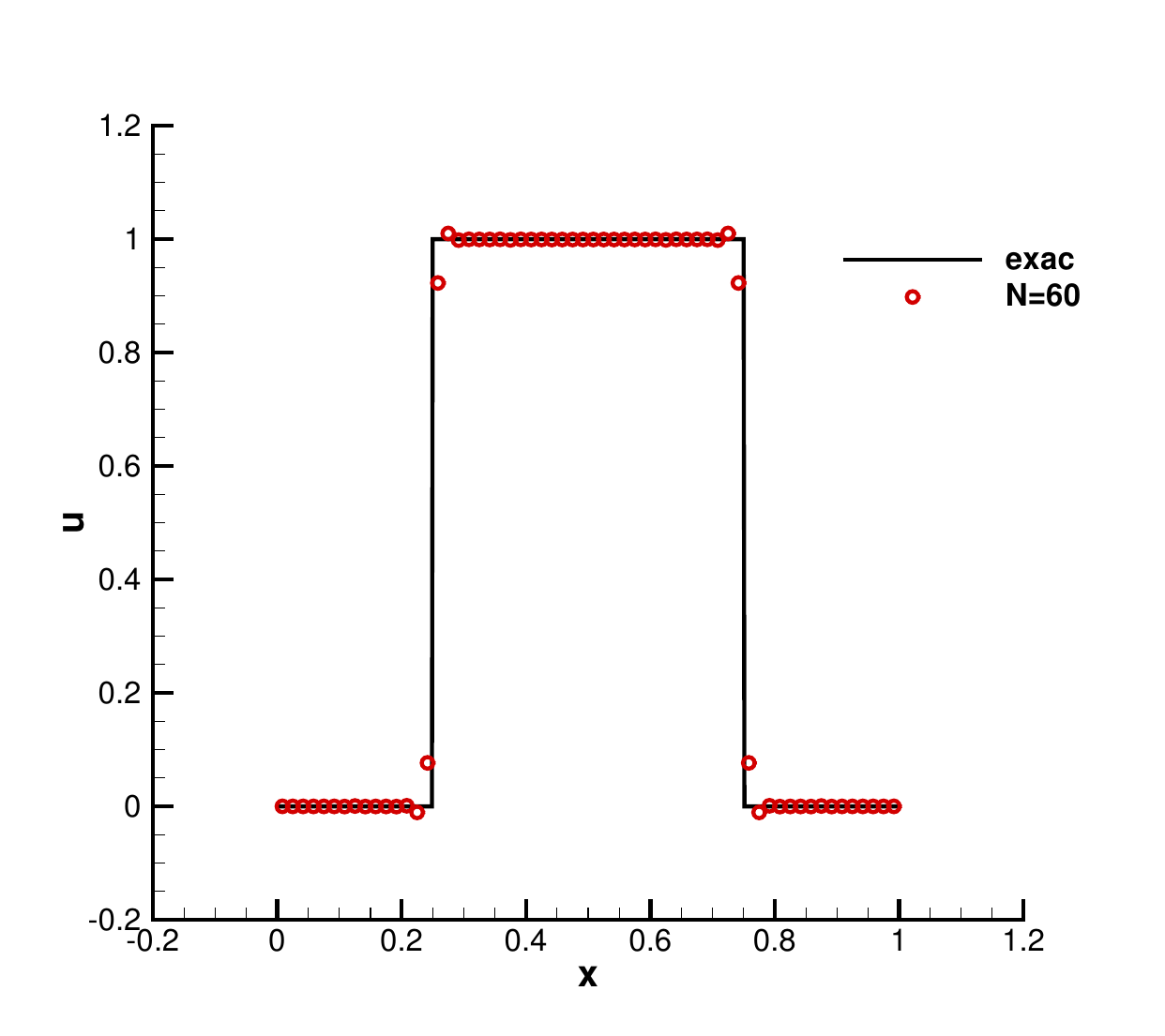} }\hspace{-0.5cm}
\subfigure[$k = 2$]{
\includegraphics[width=0.33\textwidth]{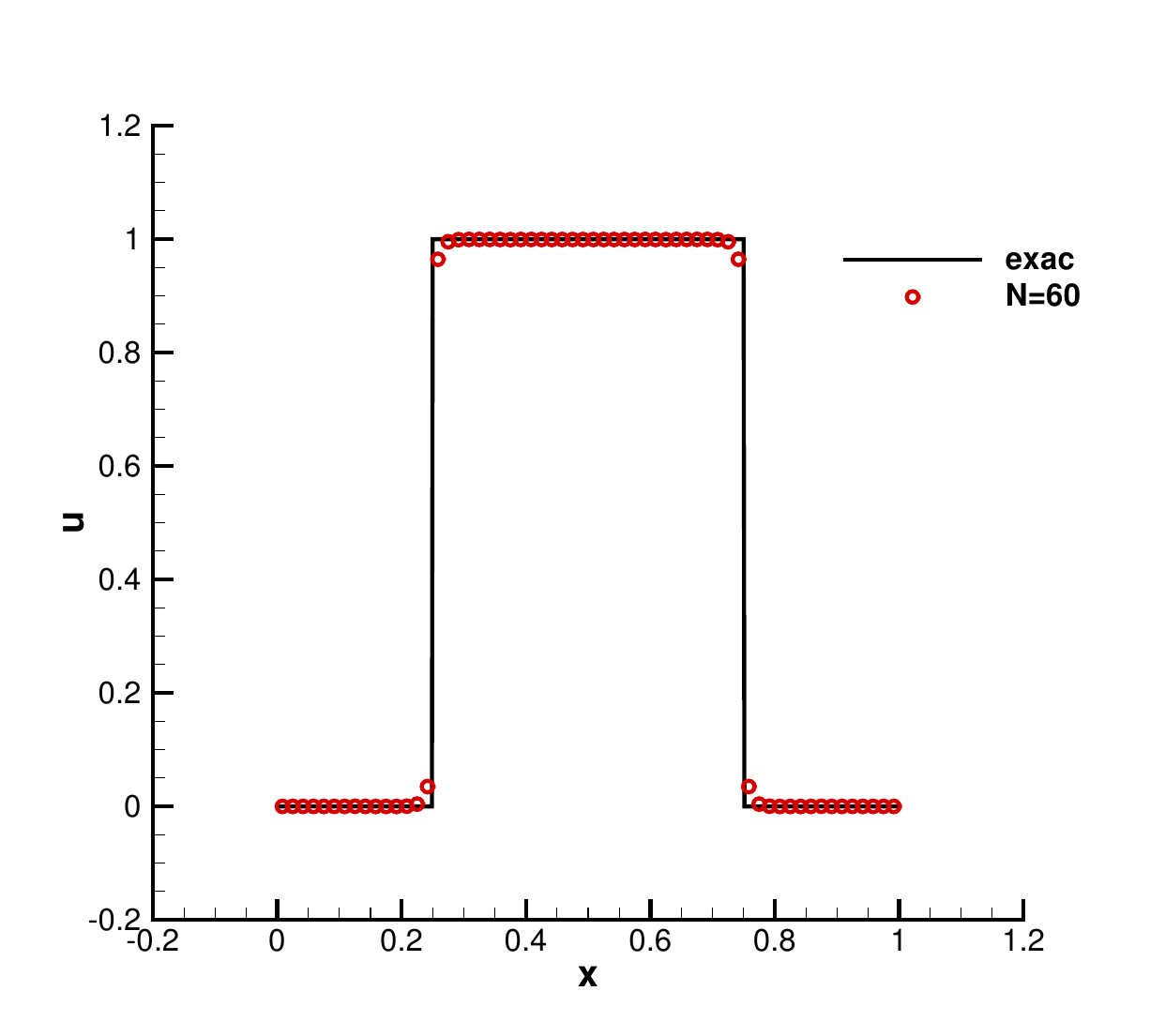} }\hspace{-0.5cm}
\subfigure[$k = 3$]{
\includegraphics[width=0.33\textwidth]{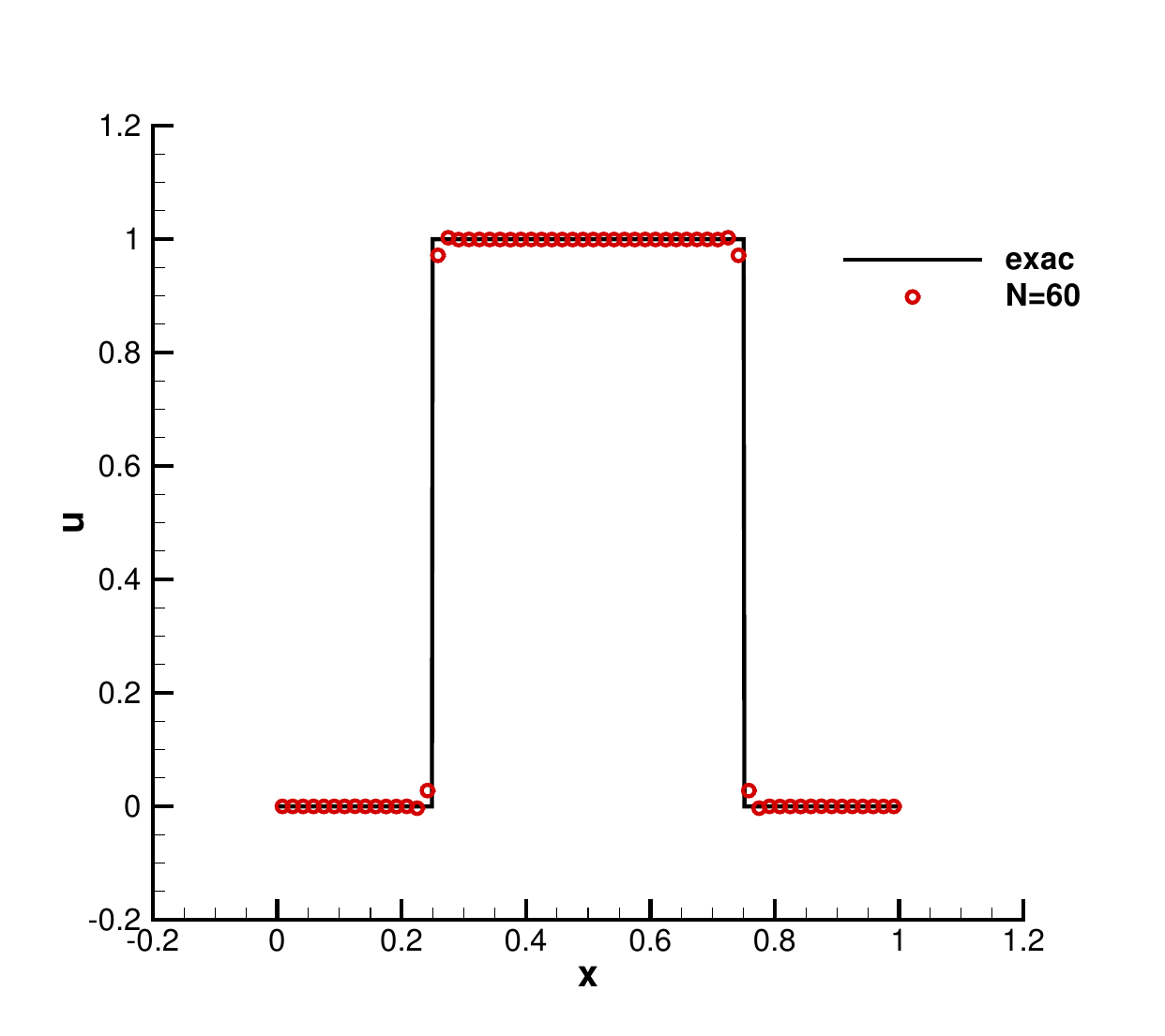} }
\caption{ \label{fig_exp2} 
Plots of the numerical solution in Example \ref{exp2} produced by the nIP 
scheme \eqref{dgscm} and \eqref{eqn_nip}, 
$\delta = 1/8, \alpha = 1/2$, $N = 60$. 
Solid line: exact solution. Red circles: numerical solutions.} 
\end{figure}

 Figure \ref{fig_exp2} plots of the numerical solution  produced by the nIP scheme \eqref{dgscm} and \eqref{eqn_nip}, 
 from which we can see there are no obvious oscillations in the numerical solutions, 
indicating the proposed algorithm can handle the singularities and discontinuities well.

\begin{exmp} \label{exp3} 
In this example, we consider the time-dependent convection-diffusion problems 
\eqref{eqn_cd} with $f(u) = u$ and the periodic boundary condition. 
The coefficient $\sigma $ is taken to be $1/2$. With a suitable choice of source 
function $f_s$, we have the exact solution as $u(x, t) = e^{-t} \sin^6(x)$. The computational 
domain $\Omega = (0, \pi)$ and the final time is $T = 2.2$. 
\end{exmp} 

From the results reported in  Tables \ref{tab_exp3_k1} - \ref{tab_exp3_k3}, we show the error and order of convergence 
of $\| u - u_h \|$ produced by the scheme 
\eqref{dgscm_td} and \eqref{eqn_nip}. For $k = 3$, we take the penalty parameter $\mu = 7/h$. 

\begin{table}[!htbp] \small
\caption{\label{tab_exp3_k1} $L^2$ errors and convergence orders produced by the scheme 
\eqref{dgscm_td} and \eqref{eqn_nip} when $k = 1$ in Example \ref{exp3}, 
$\sigma = 1/2$.}
\centering 
\begin{tabular}{|c|c||cc|cc|cc|cc|}
  \hline
\multirow{2}{*}{$\alpha$}
& \multirow{2}{*}{$N$}
& \multicolumn{2}{|c|}{$\delta = 10^{-6} $}
  & \multicolumn{2}{|c|}{$\delta = \pi/6$} & \multicolumn{2}{|c|}{$\delta = 2.5 h$} 
  & \multicolumn{2}{|c|}{$\delta = \sqrt{h}$}
 \\
  \cline{3-10}
 &  & $L^2$ error & order & $L^2$ error & order & $L^2$ error & order
 & $L^2$ error & order  \\ \hline\hline
& 24   & 4.361E-04 &  --       &  2.196E-04  &  --         &   1.971E-04  &  --        &  2.000E-04  &  --         \\
& 36   & 1.963E-04 & 1.969 &  1.049E-04 &  1.823  &   8.513E-05  &  2.071 &  8.854E-05  & 2.009   \\
& 48   & 1.109E-04 & 1.984 &  6.214E-05 &  1.819  &   4.732E-05  &  2.041 &  4.979E-05  & 2.001   \\
$ \frac 12$
& 60   & 7.114E-05 & 1.990 &  4.126E-05 &  1.835  &   3.010E-05  &  2.027 &  3.194E-05  & 1.989   \\
& 72   & 4.946E-05 & 1.994 &  2.942E-05 &  1.855  &   2.083E-05  &  2.019 &  2.221E-05  & 1.993   \\
& 84   & 3.636E-05 & 1.995 &  2.205E-05 &  1.872  &   1.527E-05  &  2.013 &  1.636E-05  & 1.985   \\ 
& 96   & 2.785E-05 & 1.997 &  1.714E-05 &  1.886  &   1.168E-05  &  2.010 &  1.254E-05  & 1.988   \\ 
   \hline
& 24   & 4.361E-04 &  --       &  2.207E-04  &  --         &   2.347E-04  &  --        &  2.311E-04  &  --         \\
& 36   & 1.963E-04 & 1.969 &  9.316E-05 &  2.127  &   1.036E-04  &  2.016 &  9.897E-05  & 2.092   \\
& 48   & 1.109E-04 & 1.984 &  5.096E-05 &  2.097  &   5.815E-05  &  2.008 &  5.443E-05  & 2.078   \\
$ \frac 52$
& 60   & 7.114E-05 & 1.990 &  3.205E-05 &  2.078  &   3.717E-05  &  2.005 &  3.430E-05  & 2.070   \\
& 72   & 4.946E-05 & 1.994 &  2.199E-05 &  2.067  &   2.580E-05  &  2.004 &  2.354E-05  & 2.064   \\
& 84   & 3.636E-05 & 1.995 &  1.602E-05 &  2.056  &   1.895E-05  &  2.002 &  1.714E-05  & 2.059   \\ 
& 96   & 2.785E-05 & 1.997 &  1.218E-05 &  2.049  &   1.450E-05  &  2.002 &  1.303E-05  & 2.055   \\ 
   \hline
\end{tabular}
\end{table}

\begin{table}[!htbp] \small
\caption{\label{tab_exp3_k2} $L^2$ errors and convergence orders produced by the scheme 
\eqref{dgscm_td} and \eqref{eqn_nip} when $k = 2$ in Example \ref{exp3}, 
$\sigma = 1/2$.}
\centering 
\begin{tabular}{|c|c||cc|cc|cc|cc|}
  \hline
\multirow{2}{*}{$\alpha$}
& \multirow{2}{*}{$N$}
& \multicolumn{2}{|c|}{$\delta = 10^{-6} $}
  & \multicolumn{2}{|c|}{$\delta = \pi/6$} & \multicolumn{2}{|c|}{$\delta = 2.5 h$} 
  & \multicolumn{2}{|c|}{$\delta = \sqrt{h}$}
 \\
  \cline{3-10}
 &  & $L^2$ error & order & $L^2$ error & order & $L^2$ error & order
 & $L^2$ error & order  \\ \hline\hline
& 24   & 1.973E-05 &  --       &  1.386E-05  &  --         &   1.380E-05  &  --        &  1.393E-05  &  --         \\
& 36   & 5.900E-06 & 2.977 &  3.985E-06 &  3.074  &   3.971E-06  &  3.072 &  4.160E-06  & 2.981   \\
& 48   & 2.498E-06 & 2.987 &  1.635E-06 &  3.096  &   1.627E-06  &  3.101 &  1.731E-06  & 3.048   \\
$ \frac 12$
& 60   & 1.282E-06 & 2.991 &  8.219E-07 &  3.083  &   8.166E-07  &  3.090 &  8.818E-07  & 3.022   \\
& 72   & 7.427E-07 & 2.993 &  4.698E-07 &  3.068  &   4.663E-07  &  3.073 &  5.021E-07  & 3.089   \\
& 84   & 4.681E-07 & 2.995 &  2.933E-07 &  3.055  &   2.910E-07  &  3.060 &  3.157E-07  & 3.011   \\ 
& 96   & 3.138E-07 & 2.996 &  1.953E-07 &  3.046  &   1.936E-07  &  3.049 &  2.091E-07  & 3.084   \\ 
   \hline
& 24   & 1.090E-05 &  --       &  9.065E-06  &  --         &   9.029E-06  &  --        &  9.035E-06  &  --         \\
& 36   & 3.184E-06 & 3.034 &  2.675E-06 &  3.010  &   2.659E-06  &  3.016 &  2.663E-06  & 3.013   \\
& 48   & 1.336E-06 & 3.019 &  1.125E-06 &  3.009  &   1.118E-06  &  3.012 &  1.120E-06  & 3.011   \\
$ \frac 52$
& 60   & 6.821E-06 & 3.013 &  5.752E-07 &  3.008  &   5.711E-07  &  3.009 &  5.723E-07  & 3.009   \\
& 72   & 3.941E-07 & 3.009 &  3.324E-07 &  3.008  &   3.300E-07  &  3.008 &  3.307E-07  & 3.008   \\
& 84   & 2.479E-07 & 3.007 &  2.091E-07 &  3.007  &   2.076E-07  &  3.007 &  2.080E-07  & 3.007   \\ 
& 96   & 1.659E-07 & 3.006 &  1.400E-07 &  3.007  &   1.390E-07  &  3.006 &  1.392E-07  & 3.006   \\ 
   \hline
\end{tabular}
\end{table}

\begin{table}[!htbp] \small
\caption{\label{tab_exp3_k3} $L^2$ errors and convergence orders produced by the scheme 
\eqref{dgscm_td} and  \eqref{eqn_nip} when $k = 3$ in Example \ref{exp3}, 
$\sigma = 1/2$.}  \centering 
\begin{tabular}{|c|c||cc|cc|cc|cc|}
  \hline
\multirow{2}{*}{$\alpha$}
& \multirow{2}{*}{$N$}
& \multicolumn{2}{|c|}{$\delta = 10^{-6} $}
  & \multicolumn{2}{|c|}{$\delta = \pi/6$} & \multicolumn{2}{|c|}{$\delta = 2.5 h$} 
  & \multicolumn{2}{|c|}{$\delta = \sqrt{h}$}
 \\
  \cline{3-10}
 &  & $L^2$ error & order & $L^2$ error & order & $L^2$ error & order
 & $L^2$ error & order  \\ \hline\hline
& 24   & 5.539E-07 &  --       &  3.290E-07  &  --         &   3.051E-07  &  --        &  3.083E-07  &  --       \\
& 36   & 1.060E-07 & 4.078 &  6.818E-08 &  3.882  &   5.843E-08  &  4.076 &  5.986E-08  & 4.042   \\
& 48   & 3.256E-08 & 4.103 &  2.258E-08 &  3.842  &   1.829E-08  &  4.037 &  1.887E-08  & 4.014   \\
$ \frac 12$
& 60   & 1.307E-08 & 4.091 &  9.575E-09 &  3.844  &   7.455E-09  &  4.022 &  7.724E-09  & 4.002   \\
& 72   & 6.216E-09 & 4.075 &  4.740E-09 &  3.857  &   3.586E-09  &  4.015 &  3.726E-09  & 3.999   \\
& 84   & 3.324E-09 & 4.061 &  2.610E-09 &  3.872  &   1.932E-09  &  4.010 &  2.014E-09  & 3.992   \\ 
& 96   & 1.936E-09 & 4.050 &  1.553E-09 &  3.886  &   1.132E-09  &  4.008 &  1.181E-09  & 3.994   \\ 
   \hline
& 24   & 5.539E-07 &  --       &  3.557E-07  &  --         &   3.558E-07  &  --        &  3.557E-07  &  --       \\
& 36   & 1.060E-07 & 4.078 &  7.051E-08 &  3.991  &   7.055E-08  &  3.990 &  7.052E-08  & 3.991   \\
& 48   & 3.256E-08 & 4.103 &  2.233E-08 &  3.996  &   2.235E-08  &  3.995 &  2.234E-08  & 3.996   \\
$ \frac 52$
& 60   & 1.307E-08 & 4.091 &  9.152E-09 &  3.998  &   9.162E-09  &  3.997 &  9.154E-09  & 3.998   \\
& 72   & 6.216E-09 & 4.075 &  4.415E-09 &  3.999  &   4.420E-09  &  3.998 &  4.416E-09  & 3.999   \\
& 84   & 3.324E-09 & 4.061 &  2.383E-09 &  3.999  &   2.387E-09  &  3.997 &  2.384E-09  & 3.998   \\ 
& 96   & 1.936E-09 & 4.050 &  1.397E-09 &  3.999  &   1.400E-09  &  3.995 &  1.398E-09  & 3.998   \\ 
   \hline
\end{tabular}
\end{table}

\begin{exmp} \label{exp4}
We consider the convection-diffusion problem in Example \ref{exp3} with piecewise constant initial values 
\begin{align*}
u_0(x) = \left\{
\begin{aligned}
u_l, & \qquad x < 0, \\
u_r, & \qquad  x > 0 .
\end{aligned}
\right.
\end{align*}
We consider two kinds of initial conditions:
\begin{enumerate}[{\rm (i)}]
\item $u_l = 0$, $u_r = 1$.
\item $u_l = 1$, $u_r = 0$.
\end{enumerate}
To see the nonlocal diffusion effect, we take four different values of $\sigma$. 
We take locally integrable kernel with $\alpha = 1/2$, $\delta = 1/8$. 
The computational domain is $\Omega = (-9, 9)$, and the final time is $T = 2$. 
\end{exmp}

From Table \ref{tab_exp4}, we can see the errors and orders of convergence for the Riemann problems in Example \ref{exp4}. The reference solution is computed with the refined mesh $h = 1/50$ and the $L^2$ errors are computed by the absolute errors $\| u_h - u_{\rm ref}\|_{L^2([-2, 6])}$. 
 The orders are around 3 and even higher when $h$ becomes smaller. 
 This phenomenon may be because  
 the spatial error is small compared to the temporal error as the mesh is refined. 
 Figure \ref{fig_exp4} plots  the numerical solution produced by the 
scheme \eqref{dgscm_td} and \eqref{eqn_nip}, from which we can see the numerical solutions become smooth with the nonlocal diffusion term, compared to the one without the nonlocal diffusion. 
We can also see that the transition width becomes larger as we increase the coefficient of the nonlocal diffusion term. This indicates that nonlocal diffusion can be another option when adding artificial diffusion to numerical schemes, especially for physical problems containing long-range interactions.

\begin{table}[!htbp] \small
 \caption{\label{tab_exp4} $L^2$ errors and convergence orders produced by the scheme 
\eqref{dgscm_td} and \eqref{eqn_nip} when $k = 2$ in Example \ref{exp4}, $\delta = 1/8$, 
$\sigma = 1/5$. The $L^2$ errors are computed on $[-2, 6]$. }
\centering 
\begin{tabular}{|c|c||c|c|c|c|c|c|c|}
  \hline
\multicolumn{2}{|c||}{$h$} & 1/2 & 1/3 & 1/4 & 1/5 & 1/6 & 1/7 & 1/8   \\ \hline
\multirow{2}{*}{I.C. (i)} 
& $L^2$ error  &2.223E-04   & 6.897E-05 &  2.913E-05  &    1.456E-05    &  8.084E-06   &  4.815E-06    &  2.997E-06    \\
&  order            & --   &  2.886       &  2.996          &  3.107             & 3.228             &   3.362           & 
  3.549   \\  \hline
\multirow{2}{*}{I.C. (ii)} 
& $L^2$ error  &2.223E-04   & 6.897E-05 &  2.913E-05  &    1.456E-05    &  8.084E-06   &  4.815E-06    &  2.997E-06    \\
&  order            & --   &  2.886       &  2.996          &  3.107             & 3.228             &   3.362           & 
  3.549   \\  \hline
\end{tabular} 
\end{table}

\begin{figure}[!htbp]
\subfigure[I.C. (i)]{ 
\includegraphics[width=0.45\textwidth]{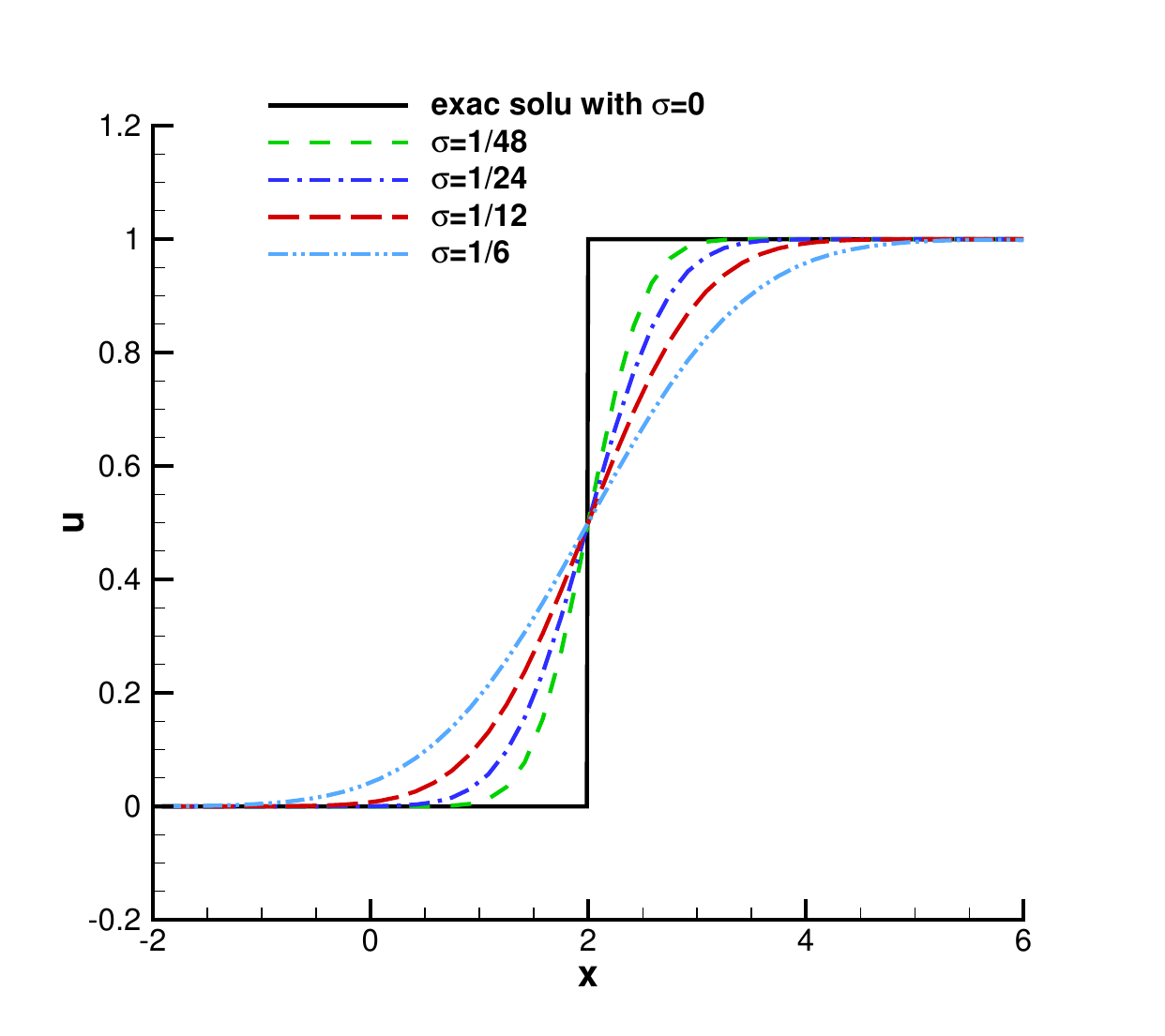} }
\subfigure[I.C. (ii)]{
\includegraphics[width=0.45\textwidth]{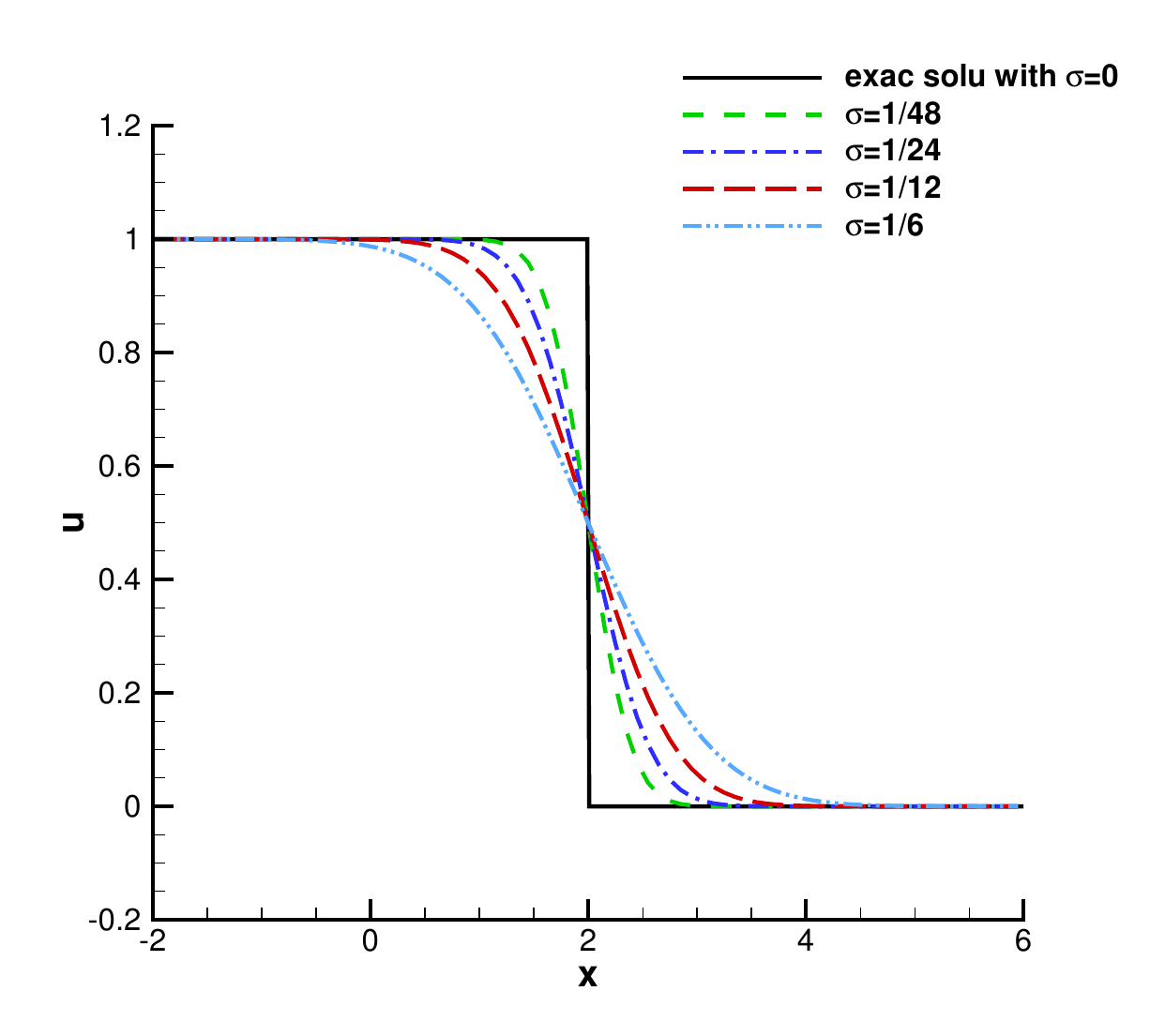} }
\caption{ \label{fig_exp4} 
Plots of the numerical solution in Example \ref{exp4} produced by the 
scheme \eqref{dgscm_td} and \eqref{eqn_nip}, $\delta = 1/8, \alpha = 1/2$, $k = 2$, $h = 1/6$. 
Solid line: exact solution with $\sigma = 0$. Green dash line: $\sigma = 1/48$. 
Blue dash-dot line: $\sigma = 1/24$. Red long dash line: $\sigma = 1/12$. 
Light blue dash dot dot line: $\sigma = 1/6$. } 
\end{figure}

\begin{exmp} \label{exp5}
Consider the viscous Burgers' equation with $f(u) = u^2/2$ in \eqref{eqn_cd}. 
We take the locally integrable kernel with $\alpha = 1/2$, $\delta = \pi/6$. The computational domain is 
$\Omega = (0, 2 \pi)$ and the final time is $T = 1.6$. Note that at time $T = 1.6$, the exact solution of the inviscid 
Burgers' equation {\rm($\sigma = 0$)} contains a shock discontinuity inside the domain. 
\end{exmp}

In Example \ref{exp5}, we investigate the nonlocal diffusion term for a convection-diffusion equation with the nonlinear convective term.  From Table \ref{tab_exp5}, 
we can see the errors and orders of convergence for the Example \ref{exp5}. 
The reference solution is computed with the refined mesh $h = \pi/250$ and the $L^2$ errors are computed by the absolute errors $\| u_h - u_{\rm ref}\|_{L^2([0, 2 \pi])}$. 
The convergence rate is around 2 and is not optimal, which could be associated with 
the lack of regularity of the exact solution. 
 We still observe the same phenomenon as in Example \ref{exp4} from the numerical solutions 
 plotted in Figure \ref{fig_exp5}, which again demonstrates the effect of the nonlocal diffusion.

\begin{table}[!htbp] \small
 \caption{\label{tab_exp5} $L^2$ errors and convergence orders produced by the scheme 
\eqref{dgscm_td} and \eqref{eqn_nip} when $k = 2$ in Example \ref{exp5}, $\delta = \pi/6$, 
$\sigma = \pi/15$.}
\centering 
\begin{tabular}{|c||c|c|c|c|c|c|c|}
  \hline
$N$ & 48 & 60 & 72 & 84 & 96 & 108 & 120 \\ \hline
$L^2$ error  &3.965E-05   & 2.903E-05 &  2.238E-05  &    1.731E-05    &  1.334E-05   &  1.040E-05    &  8.351E-05    \\
 order            & --   &  1.397       &  1.426          &  1.667             & 1.951             &   2.113           & 
  2.083   \\  \hline
\end{tabular} 
\end{table}

\begin{figure}[!htbp]
\includegraphics[width=0.6\textwidth]{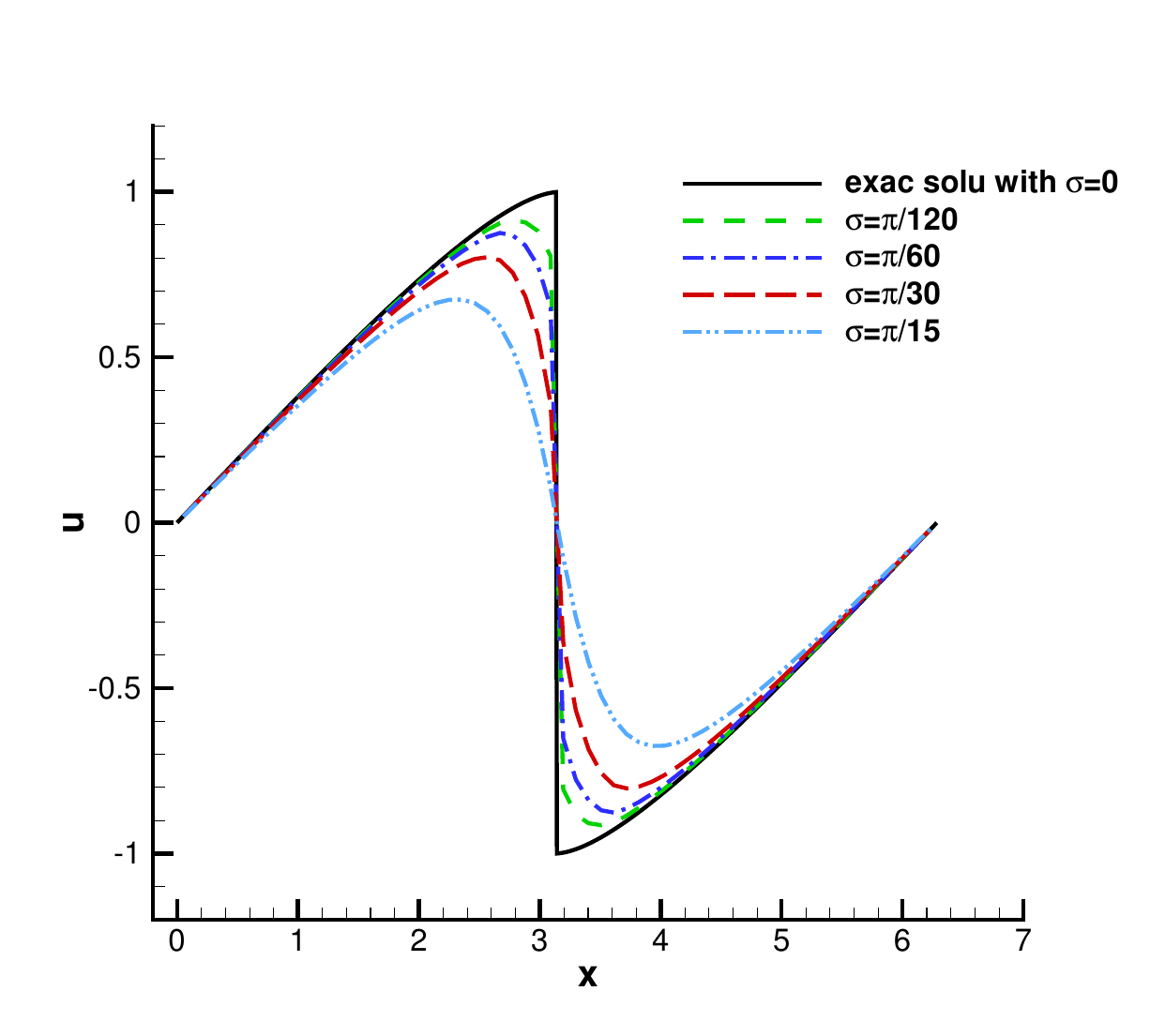} 
\caption{ \label{fig_exp5} 
Plot of the numerical solution in Example \ref{exp5} produced by the 
scheme \eqref{dgscm_td} and \eqref{eqn_nip}, $\delta = \pi/6, \alpha = 1/2$, $k = 2$, $N=60$. 
Solid line: exact solution with $\sigma = 0$. Green dash line: $\sigma = \pi/120$. 
Blue dash dot line: $\sigma = \pi/60$. Red long dash line: $\sigma = \pi/30$. 
Light blue dash dot dot line: $\sigma = \pi/15$. } 
\end{figure}

\section{Conclusion}
\label{sec_summary}

In this paper, we propose a class of penalty discontinuous Galerkin (DG) methods for nonlocal 
diffusion (ND) problems. The ND problem is an integral equation with a possibly singular kernel, which 
can describe physical systems with long-range interactions and allow solutions with low regularity. 
Therefore, it is natural to adopt the DG method to compute the ND problem when the solution contains 
singularities and discontinuities. The proposed DG methods in this paper have corresponding local counterparts, 
indicating that these methods work at least for the vanished horizon. We also present theoretical results 
on boundedness, stability, a priori error estimates, and asymptotic compatibility. To illustrate the nonlocal 
diffusion effect, we consider the time-dependent convection-diffusion equation with nonlocal diffusion 
and conduct a semi-discrete analysis. Numerical tests demonstrate that our methods are high-order, 
stable, and asymptotically compatible. Currently, the DG methods are developed for one-dimensional 
ND problems, and extending these methods to multidimensional problems is part of our future work.

\begin{appendix}

\section{The technical proof of Lemma \ref{lem_sta_cond}} 
\label{sec_proof_lem_sta_cond}

\begin{proof}
First let us consider two cases: $0 < s < \varepsilon \rho $ and $\varepsilon \rho < 
s < \hat{h}$, where $ \varepsilon \in (0, 1) $ will be determined later. 
In fact, with the definition of $|\cdot|_{J, h}$ in \eqref{eqn_seminorm}, we have
\begin{align} \label{eqn_sta_J_1}
|v_h|_{J, h}^2 = J_1 + J_2, 
\end{align}
where $J_1$ and $J_2$ are defined as 
\begin{align*}
\begin{aligned}
 J_1 &  \, = 2 \sum\limits_{j} h_j \int_0^{\varepsilon \rho} \gamma_\delta(s) 
\, \dfrac{1}{s} \int_{I_{j, 2}^s} \big(E_s^+ v_h 
- [\![ v_h ]\!]_{j + \frac 12} \big)^2 \, \dd x \dd s,  \\
J_2 & \, = 2 \sum\limits_{j} h_j \int_{\varepsilon \rho}^{\hat{h}} \gamma_\delta(s) 
\, \dfrac{1}{s} \int_{I_{j, 2}^s} \big(E_s^+ v_h 
- [\![ v_h ]\!]_{j + \frac 12} \big)^2 \, \dd x \dd s .  
\end{aligned}
\end{align*}
We now consider $J_1$, i.e. the case $0 < s < \varepsilon \rho $, and we have 
\begin{align} \label{eqn_sta_J_2}
\begin{aligned}
J_1 & \, \leq 4 \sum\limits_{j} h_j \int_0^{\varepsilon \rho} \gamma_\delta(s) 
\, \dfrac{1}{s} \int_{I_{j, 2}^s} \big(v_h(x + s) 
- v_h(x_{j + \frac 12}^+ ) \big)^2 \, \dd x \dd s \\
& \quad + 4 \sum\limits_{j} h_j \int_0^{\varepsilon \rho} \gamma_\delta(s) 
\, \dfrac{1}{s} \int_{I_{j, 2}^s} \big(v_h(x ) 
- v_h(x_{j + \frac 12}^- ) \big)^2 \, \dd x \dd s. 
\end{aligned}
\end{align}
In fact, we have 
\begin{align} \label{eqn_sta_J_3}
\begin{aligned}
 \int_{I_{j, 2}^s} \big(v_h(x + s) - v_h(x_{j + \frac 12}^+ ) \big)^2 \,\dd x 
& \, \leq  \int_{I_{j, 2}^s} \Big(\int_{x_{j + \frac 12}}^{x + s} \partial_y v_h(y)\,\dd y \Big)^2 
\,\dd x \\
& \, \leq \int_{I_{j, 2}^s} \Big(\int_{x_{j + \frac 12}}^{x + s}  1 \,\dd y \Big)^2 \,\dd x \,  
 \| \partial_y v_h \|_{L^{\infty}(I_{j+1, 2}^s )}^2 \\
 & \, = \frac 13 s^3 \,  \| \partial_y v_h \|_{L^{\infty}(I_{j+1, 2}^s )}^2 . 
\end{aligned}
\end{align}
Similarly, we also have 
\begin{align} \label{eqn_sta_J_4}
\int_{I_{j, 2}^s} \big(v_h(x ) - v_h(x_{j + \frac 12}^- ) \big)^2 \,\dd x 
\leq \frac 13 s^3 \,  \| \partial_y v_h \|_{L^{\infty}(I_{j , 2}^s )}^2 . 
\end{align}
Plug \eqref{eqn_sta_J_3} and \eqref{eqn_sta_J_4} into \eqref{eqn_sta_J_2}, with 
inverse estimates we then obtain 
\begin{align} \label{eqn_sta_J_5}
\begin{aligned}
J_1  & \, \leq  \frac 43 \int_0^{\varepsilon \rho} 
s^2 \gamma_\delta(s) \,\dd s\,  \sum\limits_{j} h_j  
\big( \| \partial_y v_h \|_{L^{\infty}(I_{j+1, 2}^s )}^2 
+ \| \partial_y v_h \|_{L^{\infty}(I_{j, 2}^s )}^2 \big)  \\
& \, \leq C  \int_0^{\varepsilon \rho} s^2 \gamma_\delta(s) \,\dd s 
 \sum\limits_{j}  \| \partial_y v_h \|_{L^{2}(I_j )}^2 .   
 \end{aligned}
\end{align}


Next we consider the $|\cdot|_{E, h}$ defined in \eqref{eqn_seminorm}. 
Similar to \eqref{eqn_sta_J_1}, we divide the $| v_h |_{E, h}^2 $ into three parts: 
\begin{align} \label{eqn_sta_E_1}
|v_h|_{E, h}^2  = E_1 + E_2 + E_3,  
\end{align}
where $E_1 = E_{1,1} + E_{1,2}$, $E_2 = E_{2,1} + E_{2,2}$, and $E_3$ are given as 
\begin{align*}
\begin{aligned}
 E_{1,1} & \, = 2 \sum\limits_{j} \int_0^{\varepsilon \rho} \gamma_\delta(s) 
\int_{I_{j,1}^s} g_{v_h}(x, s)^2 \, \dd x \dd s , \quad  
E_{1,2} =  \sum\limits_{j} \int_0^{\varepsilon \rho} 
\gamma_\delta(s) \int_{I_{j,2}^s} g_{v_h}(x, s)^2 \, \dd x \dd s , \\
 E_{2, 1} & \, = 2 \sum\limits_{j} \int_{\varepsilon \rho}^{\hat{h}} \gamma_\delta(s) 
\int_{I_{j,1}^s} g_{v_h}(x, s)^2 \, \dd x \dd s, \quad 
E_{2, 2}  = 2 \sum\limits_{j} \int_{\varepsilon \rho}^{\hat{h}} \gamma_\delta(s) 
\int_{I_{j,2}^s} g_{v_h}(x, s)^2 \, \dd x \dd s,  \\
E_3 & \, = 2 \sum\limits_{j} \int_{\hat{h}}^{\delta} \gamma_\delta(s) 
\int_{I_j} g_{v_h}(x, s)^2 \, \dd x \dd s. 
\end{aligned}
\end{align*}
We claim that for $\varepsilon$ sufficiently small but independent of 
$h$ and $\delta$, we have 
\begin{align} \label{eqn_sta_E_2}
E_{1,1} \geq C \, \int_0^{\varepsilon \rho} s^2 \gamma_\delta(s) \,\dd s 
 \sum\limits_{j}  \| \partial_y v_h \|_{L^{2}(I_j )}^2 .  
\end{align}
 If \eqref{eqn_sta_E_2} holds, 
then $J_1$ can be bounded by $E_{1, 1}$. Now we prove that \eqref{eqn_sta_E_2} 
is true when $\varepsilon$ is appropriately chosen.  

From the definition of $g_{v_h}(x, s)$ in \eqref{eqn_gv}, we know that 
$g_{v_h}(x, s) = E_s^+ v_h(x)$ on $I_{j, 1}^s \times (0, \varepsilon \rho)$. 
By Taylor expansion, we obtain 
$$
E_s^+ v_h(x) = v_h(x + s) - v_h(x) = \sum\limits_{l = 1}^k \frac{\partial_x^{l} v_h (x)}{l!} s^l.  
$$
Thus, with Cauchy-Schwarz inequality, we have 
\begin{align} \label{eqn_sta_E_3}
\begin{aligned}
\int_{I_{j,1}^s} g_{v_h}(x, s)^2 \,\dd x & \, = \int_{I_{j,1}^s} \Big( \sum\limits_{l = 1}^k 
\frac{\partial_x^{l} v_h (x)}{l!} s^l \Big)^2 \,\dd x \\
& \, = \int_{I_{j,1}^s} \big( \partial_x v_h(x) s \big)^2 \,\dd x 
+ 2 \int_{I_{j,1}^s} \big( \partial_x v_h(x) s \big) \Big( \sum\limits_{l = 2}^k 
\frac{\partial_x^{l} v_h (x)}{l!} s^l \Big)  \,\dd x \\
& \quad + \int_{I_{j,1}^s}  \Big( \sum\limits_{l = 2}^k 
\frac{\partial_x^{l} v_h (x)}{l!} s^l \Big)^2  \,\dd x \\
& \, \geq \frac 12 \, s^2  \int_{I_{j,1}^s} \big( \partial_x v_h(x) \big)^2 \,\dd x 
-  \int_{I_{j,1}^s}  \Big( \sum\limits_{l = 2}^k 
\frac{\partial_x^{l} v_h (x)}{l!} s^l \Big)^2  \,\dd x . 
\end{aligned}
\end{align}
Note that $s \in (0, \varepsilon \rho)$, then with Cauchy-Schwarz inequality and inverse estimates we have 
\begin{align} \label{eqn_sta_E_4}
\begin{aligned}
& \int_{I_{j,1}^s}  \Big( \sum\limits_{l = 2}^k \frac{\partial_x^{l} v_h (x)}{l!} s^l \Big)^2  \,\dd x \\
& \quad = \int_{I_{j,1}^s}  \Big( \sum\limits_{l = 2}^k \partial_x^{l} v_h (x) (h_j - s)^{l - 1} \cdot 
 (h_j - s)^{- l + 1} \frac{s^l}{l!}  \Big)^2  \,\dd x  \\
& \quad \leq   s^2 \bigg( \sum\limits_{l = 2}^k \frac{1}{(l!)^{2}}  \Big( \frac{s}{h_j - s} 
\Big)^{2l - 2} \bigg) 
\bigg( \sum\limits_{l = 2}^k  (h_j - s)^{2l - 2} \int_{I_{j,1}^s} 
 \big( \partial_x^{l} v_h (x) \big)^2\dd x \bigg) \\
 & \quad \leq C_2 \bigg( \sum\limits_{l = 2}^k  \Big( \frac{\varepsilon}{1 
 - \varepsilon} \Big)^{2l - 2} \bigg) s^2 \int_{I_{j,1}^s}  \big( \partial_x v_h (x) \big)^2\dd x, 
\end{aligned}
\end{align}
where $C_2 > 0$ is independent of $h$ and $\delta$. 
Since 
$$
\sum\limits_{l = 2}^k  \Big( \frac{\varepsilon}{1 
 - \varepsilon} \Big)^{2l - 2} = \varepsilon_1 \frac{1 - \varepsilon_1^{k - 1}}{1 - \varepsilon_1}, 
 \quad \varepsilon_1 = \Big( \frac{\varepsilon}{1 - \varepsilon} \Big)^2, 
$$
we then take $\varepsilon $ statisfying $ \varepsilon \leq 1/(1 + \sqrt{4 C_2 + 1}) $ such that 
$\varepsilon_1 \leq 1/(4 C_2 + 1)$, which implies 
$$
  C_2 \varepsilon_1 \frac{1 - \varepsilon_1^{k - 1}}{1 - \varepsilon_1} \leq 
  \frac{C_2 \varepsilon_1}{1 - \varepsilon_1}  \leq \frac 14 \;. 
$$
Therefore, plugging \eqref{eqn_sta_E_4} into \eqref{eqn_sta_E_3} with small $\varepsilon$ 
satisfying the above inequality, we obtain 
\begin{align} \label{eqn_sta_E_5}
\int_{I_{j,1}^s} g_{v_h}(x, s)^2 \,\dd x \geq  \frac 14 \, s^2  \int_{I_{j,1}^s} 
\big( \partial_x v_h(x) \big)^2 \,\dd x .  
\end{align}
If we take the Taylor expansion of $E_s^+ v_h(x)$ that 
$$
E_s^+ v_h(x) = - \sum\limits_{l = 1}^k \frac{(\partial_x^{l} v_h) (x + s)}{l!} (-s)^l,   
$$
we can still obtain a similar result in the following. 
\begin{align} \label{eqn_sta_E_6}
\int_{I_{j,1}^s} g_{v_h}(x, s)^2 \,\dd x \geq  \frac 14 \, s^2  \int_{I_{j,1}^s} 
\big( (\partial_x v_h)(x + s) \big)^2 \,\dd x .  
\end{align}
Therefore, with \eqref{eqn_sta_E_5} and \eqref{eqn_sta_E_6} we can obtain
\begin{align} \label{eqn_sta_E_7}
\begin{aligned}
E_{1,1} & \, = \sum\limits_{j} \int_0^{\varepsilon \rho} \gamma_\delta(s) 
\Big( 2 \int_{I_{j,1}^s} g_{v_h}(x, s)^2 \,\dd x \Big) \dd s \\
&\, \geq \frac 14  \,\int_0^{\varepsilon \rho} s^2 \gamma_\delta(s) 
 \sum\limits_{j} \Big(\int_{I_{j,1}^s} 
\big( \partial_x v_h(x) \big)^2 \,\dd x + \int_{I_{j,1}^s} 
\big( (\partial_x v_h)(x + s) \big)^2 \,\dd x  \Big) \dd s \\
& \, \geq \frac 14  \,\int_0^{\varepsilon \rho} s^2 \gamma_\delta(s) 
 \sum\limits_{j} \int_{I_{j}} \big( \partial_x v_h(x) \big)^2 \, \dd x \dd s ,  
 \end{aligned}
\end{align}
which is exactly the result we want, as we claim in \eqref{eqn_sta_E_2}.  
Now let us fix $\varepsilon = \varepsilon_0$ so that \eqref{eqn_sta_E_2} holds, 
we then consider the another case: $\varepsilon_0 \rho < s < \hat{h} $. 
 In fact, we have
 \begin{align} \label{eqn_sta_E_8}
 \begin{aligned}
 J_2 & \, \leq 2 \sum\limits_{j} h_j \int_{\varepsilon_0 \rho}^{\hat{h}} \gamma_\delta(s) 
\, \dfrac{1}{\varepsilon_0 \rho} \int_{I_{j, 2}^s} \big(E_s^+ v_h 
- [\![ v_h ]\!]_{j + \frac 12} \big)^2 \, \dd x \dd s  \\
& \, \leq \frac{2 \nu}{\varepsilon_0} \sum\limits_{j} h_j 
\int_{\varepsilon_0 \rho}^{\hat{h}} \gamma_\delta(s) 
\int_{I_{j, 2}^s} \big(E_s^+ v_h - [\![ v_h ]\!]_{j + \frac 12} \big)^2 \, \dd x \dd s 
= \frac{ \nu}{\varepsilon_0} E_{2,2},  
\end{aligned}
 \end{align}
 where $\nu$ is defined in \eqref{mesh_regularity}. 
Therefore, by \eqref{eqn_sta_J_5}, \eqref{eqn_sta_E_1}, \eqref{eqn_sta_E_2} and \eqref{eqn_sta_E_8}, 
we obtain \eqref{eqn_sta_cond} and complete the proof. 

\end{proof}

\end{appendix}

\end{document}